\renewcommand\@makefntext[1]{\noindent #1}
\numberwithin{equation}{section}
\definecolor{db}{RGB}{23,20,219}
\definecolor{dg}{RGB}{2,101,15}
\colorlet{sectitlecolor}{red!60!black}
\colorlet{sectboxcolor}{cyan!30}
\colorlet{secnumcolor}{orange}
\sffamily\color{sectitlecolor}\Large\bfseries\filcenter}{}{2em}{\thesection.\quad #1}%
\newtheoremstyle{mytheorem}{5pt}{}{\color{db}}{}{\color{db}\bfseries}{}{ }{}
\theoremstyle{mytheorem}
\newtheorem{theorem}{Theorem}[section]
\newtheorem{corollary}[theorem]{Corollary}
\newtheorem{proposition}[theorem]{Proposition}
\theoremstyle{definition}
\theoremstyle{example}
\theoremstyle{remark}
\newtheorem{remark}[theorem]{Remark}
\numberwithin{equation}{section}
\newcommand{\FTS}[6]{%
 {}_{3}F_{2}\!\!\left[\left.%
 \begin{matrix}%
 {#1}, {#2}, {#3} \vspace{1mm}\\%
 {#4}, {#5}%
 \end{matrix} \ \right\vert \ {#6} \right]} 
\newcommand{\ov}[1]{\overline{#1}}
\newcommand{\zetabar}[1]{\zeta(\overline{#1})}
\newcommand{\bm}[1]{\boldsymbol{#1}}
\def\Li{{\textrm{Li}}}
\begin{document}
\footnotetext{%
Date: 2021-10-01; Version 7. 
}

\title{On the convolutions of sums of multiple zeta(-star) values of height one}

\author{Kwang-Wu Chen}
\address{Department of Mathematics, University of Taipei, 10048 Taipei, Taiwan}
\email{kwchen@utaipei.edu.tw}
\thanks{The first author (corresponding author)
was funded by the Ministry of Science and Technology,
Taiwan, R.O.C., under Grant MOST 110-2115-M-845-001.}

\author{Minking Eie}
\address{Department of Mathematics, National Chung Cheng University, 168 University Road, Min-Hsiung, Chia-Yi 62145, Taiwan}
\email{minkingeie@gmail.com}


\begin{abstract}In this paper, we investigate  
the sums of mutliple zeta(-star) values of height one:
$Z_{\pm}(n)=\sum_{a+b=n}
(\pm 1)^b\zeta(\{1\}^a,b+2)$,
$Z_{\pm}^{\star}(n)=\sum_{a+b=n}
(\pm 1)^b\zeta^{\star}(\{1\}^a,b+2)$.
In particular, we prove that the weighted sum 
$\sum_{\substack{0\leq m\leq p\\ m: {\rm even}}}
\sum_{\mid\bm\alpha\mid=p+3}
2^{\alpha_{m+1}+1}\zeta(\alpha_0,\alpha_1,\ldots,\alpha_m,\alpha_{m+1}+1)
$
can be evaluated through the convolution of $Z_{-}(m)$ and $Z_{+}(n)$ with $m+n=p$.
\end{abstract}

\keywords{Multiple zeta value, Shuffle product}

\subjclass[2020]{Primary 11M32; Secondary: 05A15, 33B15}

\maketitle

\section{Introduction}\label{sec1}
For an $r$-tuple $\boldsymbol{\alpha} = (\alpha_{1}, \alpha_{2}, \ldots, \alpha_{r})$ 
of positive integers with $\alpha_{r} \geq 2$, a multiple zeta value $\zeta(\bm\alpha)$
and a multiple zeta-star value $\zeta^\star(\bm\alpha)$
are defined to be \cite{E09, E13, Ohno2005}
$$
  \zeta(\boldsymbol{\alpha})
  = \sum_{1 \leq k_{1} < k_{2} < \cdots < k_{r}} k_{1}^{-\alpha_{1}}
    k_{2}^{-\alpha_{2}} \cdots k_{r}^{-\alpha_{r}},
$$
and
$$
  \zeta^{\star}(\boldsymbol{\alpha})
  = \sum_{1 \leq k_{1} \leq k_{2} \leq \cdots \leq k_{r}} k_{1}^{-\alpha_{1}}
    k_{2}^{-\alpha_{2}} \cdots k_{r}^{-\alpha_{r}}.
$$
We denote the parameters $w(\bm\alpha)=\mid\bm\alpha\mid 
= \alpha_{1} + \alpha_{2} + \cdots + \alpha_{r}$,
$d(\bm\alpha)=r$, and $h(\bm\alpha)=\#\{i\mid \alpha_i>1, 1\leq i\leq r\}$,
called respectively the weight, the depth, and the height of $\bm\alpha$
(or of $\zeta(\bm\alpha)$, or of $\zeta^\star(\bm\alpha)$).
Multiple zeta values of height one 
are of the form $\zeta(\{1\}^{m},n+2)$,
where $\{a\}^{k}$ is $k$ repetitions of $a$. 
They have the generating function \cite{BBB1997, OZ01}

\begin{align}\label{eq.gn1}
  \sum_{m=0}^{\infty} \sum_{n=0}^{\infty} \zeta(\{1\}^{m},n+2) x^{m+1} y^{n+1}
  &= 1 - \frac{\Gamma(1-x) \Gamma(1-y)}{\Gamma(1-x-y)} \\
  &= 1 - \exp \left\{ \sum_{k=2}^{\infty} (x^{k}+y^{k}-(x+y)^{k})
    \frac{\zeta(k)}{k} \right\}.\nonumber
\end{align}

On the other hand, we derive the generating function of multiple zeta-star values
of height one $\zeta^\star(\{1\}^m,n+2)$ as follows.

\begin{equation}\label{eq.gn2}
\sum^\infty_{m=0}\sum^\infty_{n=0}\zeta^\star(\{1\}^m,n+2)x^my^n
=\frac{1}{(1-x)(1-y)}
\FTS{1}{1}{1-y}{2-x}{2-y}{1},
\end{equation}
where ${}_3F_2$ is the generalization hypergeometry series which is defined by
$$
\FTS{a_1}{a_2}{a_3}{b_1}{b_2}{z}
=\sum^\infty_{k=0}\frac{(a_1)_k(a_2)_k(a_3)_k}{(b_1)_k(b_2)_k}\frac{z^k}{k!}
$$
The generating functions of multiple zeta-star values for any fixed weight, depth, and height 
can be found in \cite{AKO2007}.
For any multiple zeta value $\zeta(\bm\alpha)$, 
we put a bar on top of $\alpha_j$ ($j=1,2,\ldots,r$) if there is a sign
$(-1)^{k_j}$ appearing in the denominator of its summation \cite{BBB1997, Xu2019}. 
For example,
$$
\zeta(\ov{\alpha_1},\ov{\alpha_2},\alpha_3,\ldots,\alpha_r)
=\sum_{1\leq k_1<k_2<\cdots<k_r}
\frac{(-1)^{k_1+k_2}}
{k_1^{\alpha_1}k_2^{\alpha_2}k_3^{\alpha_3}\cdots k_r^{\alpha_r}}.
$$
Due to Kontsevich \cite{LM95}, multiple zeta values can be expressed as iterated integrals over simplices of weight dimension:
\begin{equation} \label{e1.1}
  \zeta(\alpha_{1}, \alpha_{2}, \ldots, \alpha_{r})
  = \int_{E_{\mid\boldsymbol{\alpha}\mid}} \Omega_{1} \Omega_{2} \cdots
    \Omega_{\mid\boldsymbol{\alpha}\mid} \quad \textrm{or} \quad
    \int_{0}^{1} \Omega_{1} \Omega_{2} \cdots \Omega_{\mid\boldsymbol{\alpha}\mid}
\end{equation}
with $E_{\mid\boldsymbol{\alpha}\mid}: 0 < t_{1} < t_{2} < \cdots
< t_{\mid\boldsymbol{\alpha}\mid} < 1$ and
\begin{equation}\label{eq.domain}
  \Omega_{j} = \begin{cases}
  \dfrac{\mathrm{d}t_{j}}{1-t_{j}}
    &\textrm{if $j = 1, \alpha_{1}+1, \alpha_{1}+\alpha_{2}+1, \ldots, \alpha_{1}+\alpha_{2}+\cdots+\alpha_{r-1}+1$}, \\
  \dfrac{\mathrm{d}t_{j}}{t_{j}} &\textrm{otherwise}. \\
  \end{cases}
\end{equation}
For example,
\[
  \zeta(p+2)
  = \int_{E_{p+2}} \frac{\mathrm{d}t_{1}}{1-t_{1}} \prod_{k=2}^{p+2}
    \frac{\mathrm{d}t_{k}}{t_{k}}
\]
and 
\[
  \zeta(\{1\}^{m},n+2)
  = \int_{E_{m+n+2}} \prod_{j=1}^{m+1} \frac{\mathrm{d}t_{j}}{1-t_{j}}
    \prod_{k=m+2}^{m+n+2} \frac{\mathrm{d}t_{k}}{t_{k}},
\]
where $p$, $m$ and $n$ are all nonnegative integers. When a multiple zeta value $\zeta(\boldsymbol{\alpha})$ is expressed in iterated integrals as shown in \eqref{e1.1}, its \emph{dual}, is obtained by a change of variables
\[
  u_{1} = 1 - t_{\mid\boldsymbol{\alpha}\mid}, \quad
  u_{2} = 1 - t_{\mid\boldsymbol{\alpha}\mid-1}, \quad \ldots, \quad
  u_{\mid\boldsymbol{\alpha}\mid} = 1 - t_{1}.
\]
We list two simple dual results here
\[
  \zeta(p+2) = \zeta(\{1\}^{p},2) \quad \textrm{and} \quad
  \zeta(\{1\}^{m},n+2) = \zeta(\{1\}^{n},m+2),
\]
where $p$, $m$, $n$ are all nonnegative integers.
In 2016, Kaneko and Sakata \cite{KS2016} gave an explicit formula for the 
MZV of height one: For any integers $r,k\geq 1$,
$$
\zeta(\{1\}^{r-1},k+1)=\sum^{\min(r,k)}_{j=1}(-1)^{j-1}
\sum_{w(\bm a)=k, w(\bm b)=r\atop d(\bm a)=d(\bm b)=j}
\zeta(\bm{a+b}),
$$
where, for $\bm a=(a_1,\ldots,a_j)$ and $\bm b=(b_1,\ldots,b_j)$ of the same depth,
$\zeta(\bm{a+b})$ denotes $\zeta(a_1+b_1,\ldots,a_j+b_j)$.

A special case of results proved by Le and Murakami \cite{LM95} is stated as follows.
\begin{equation}\label{eq.a.1}
\sum_{a+b=2w-2}(-1)^{a+1}\zeta(\{1\}^a,b+2)
=2\zeta(\ov{2w}).
\end{equation}
Arakawa and Kaneko \cite{AK1999} defined the function
$$
\xi_k(s)=\frac1{\Gamma(s)}\int^\infty_0\frac{t^{s-1}}{e^t-1}
\Li_k(1-e^{-t})\,dt,
$$
where $\Li_k(s)$ denotes the $k$-th polylogarithm 
$\Li_k(s)=\sum^\infty_{n=1}\frac{s^n}{n^k}$.
It is exactly the multiple zeta-star values of height one
$$
\xi_k(s)=\zeta^\star(\{1\}^{s-1},k+1).
$$
They gave a representation as follows \cite[Theorem 6.(ii)]{AK1999}.
\begin{multline*}
\xi_k(s) = (-1)^{k-1}\Bigl(\sum_{a+b=k-2}\zeta(\{1\}^a,2,\{1\}^b,s)
+s\zeta(\{1\}^{k-1},s+1)\Bigr) 
+\sum^{k-2}_{j=0}(-1)^j\zeta(k-j)\zeta(\{1\}^j,s).
\end{multline*}
Recently, there are a lot of properties of generalized Arakawa-Kaneko zeta functions
discovered (ref. \cite{Chen2019, CC2010, Kargein2020}). 
In 2005, Ohno \cite[Theorem 8]{Ohno2005} proved that for any integer $w>1$,
\begin{equation}\label{eq.d.1}
\sum_{a+b=w-2}\zeta^\star(\{1\}^a,b+2)
=2(w-1)(1-2^{1-w})\zeta(w)=-2(w-1)\zeta(\ov{w}).
\end{equation}
In this paper, we define
\begin{alignat*}{3}
  Z_-(n)&= \sum_{a+b=n} (-1)^{b} \zeta(\{1\}^{a},b+2),
  &\quad\textrm{ and } \quad
  &&Z_+(n)&= \sum_{a+b=n} \zeta(\{1\}^{a},b+2),\\
  Z^\star_-(n)&=\sum_{a+b=n} (-1)^b \zeta^\star(\{1\}^a,b+2),
  &\quad\textrm{ and } \quad
  &&Z^\star_+(n)&= \sum_{a+b=n} \zeta^\star(\{1\}^a,b+2).
\end{alignat*}
Eq\,(\ref{eq.a.1}) and Eq\,(\ref{eq.d.1}) are properties of $Z_-(n)$ and $Z^\star_+(n)$.
Thus, we want to investigate the properties and their values 
of these four functions.

Here we list our main theorems as follows:

\begin{theorem} \label{T1.1}
For any nonnegative integer $p$, we have
\begin{align}\label{eq.abw}\nonumber
  &\sum_{\substack{0 \leq m \leq p \\ m: \textup{ even}}}
    \sum_{\mid\boldsymbol{\alpha}\mid=p+3} 2^{\alpha_{m+1}+1}
    \zeta(\alpha_{0}, \alpha_{1}, \ldots, \alpha_{m}, \alpha_{m+1}+1) \\
  &\quad= \sum_{m=0}^{\lfloor p/2\rfloor} Z_-(2m) Z_+(p-2m) 
    + Z_+(p+2)
    +\left\{\begin{array}{ll}
    (p+2)\zeta(p+4)-Z_-(p+2), &\mbox{ if $p$ is even,}\\
    (p-1)\zeta(p+4), &\mbox{ if $p$ is odd.}
    \end{array}\right. 
\end{align}
\end{theorem}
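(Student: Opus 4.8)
The plan is to pass to Kontsevich's iterated–integral representation \eqref{e1.1}--\eqref{eq.domain} and exploit the shuffle product. Write $\omega_1=\mathrm dt/(1-t)$ and $\omega_0=\mathrm dt/t$, so that $\zeta(\{1\}^a,b+2)=\int_{E_{a+b+2}}\omega_1^{\,a+1}\omega_0^{\,b+1}$ is the integral of the ``staircase'' word $\omega_1^{a+1}\omega_0^{b+1}$. A word $\omega_1\omega_0^{\gamma_1-1}\cdots\omega_1\omega_0^{\gamma_D-1}$ represents $\zeta(\gamma_1,\dots,\gamma_D)$, whose depth $D$ equals its number of $\omega_1$'s and whose last argument $\gamma_D$ is one more than the length of its terminal $\omega_0$–run. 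Reading the left–hand side of \eqref{eq.abw} through this dictionary, the substitution $\gamma_i=\alpha_i$ $(i\le m)$, $\gamma_D=\alpha_{m+1}+1$ identifies it with
\[
  \sum_{\substack{D\ge 2\\ D\ \mathrm{even}}}\
  \sum_{\substack{\gamma_1+\cdots+\gamma_D=p+4\\ \gamma_i\ge 1,\ \gamma_D\ge 2}}
  2^{\gamma_D}\,\zeta(\gamma_1,\dots,\gamma_D),
\]
the sum of all even–depth multiple zeta values of weight $p+4$, each weighted by $2$ to the power of its last argument. This is the quantity I must produce on the right.

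First I would reduce the even–indexed convolution to a full one. Duality gives $\zeta(\{1\}^a,b+2)=\zeta(\{1\}^b,a+2)$, and pairing $(a,b)\leftrightarrow(b,a)$ in $Z_-(n)$ shows $Z_-(n)=0$ for odd $n$; hence $\sum_{m}Z_-(2m)Z_+(p-2m)=\sum_{j+k=p}Z_-(j)Z_+(k)$. Expanding both factors as integrals and applying the shuffle relation $\big(\int u\big)\big(\int v\big)=\int(u\,\mathbin{\sqcup\!\!\sqcup}\,v)$ turns this convolution into
\[
  \sum_{c+d+a+b=p}(-1)^{d}
  \int\big(\omega_1^{c+1}\omega_0^{d+1}\big)\,\mathbin{\sqcup\!\!\sqcup}\,\big(\omega_1^{a+1}\omega_0^{b+1}\big),
\]
a signed sum of shuffles of two staircases. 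Every shuffled word again begins with $\omega_1$ and ends with $\omega_0$, so each is an admissible multiple zeta value of weight $p+4$, and its depth is the total number of $\omega_1$'s, namely $(c+1)+(a+1)=c+a+2$.

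The heart of the argument is a purely combinatorial coefficient count. For a fixed target word representing $\zeta(\gamma_1,\dots,\gamma_D)$ I would evaluate $\sum_{c,d,a,b}(-1)^{d}\,\kappa$, where $\kappa$ denotes the number of shufflings of $\omega_1^{c+1}\omega_0^{d+1}$ with $\omega_1^{a+1}\omega_0^{b+1}$ producing that word. Two mechanisms govern the result. Interleaving the two terminal $\omega_0$–runs contributes binomial coefficients which, summed over how the final run is partitioned between the two staircases, collapse to a power of $2$ in its length. The sign $(-1)^{d}$, summed over the remaining splitting freedom, annihilates the words of odd depth $D$ and, on the even–depth words, combines with this binomial tail–sum to leave precisely the coefficient $2^{\gamma_D}$; the vanishing on odd depth is the source of the even–$m$ restriction in \eqref{eq.abw}. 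Establishing this uniform coefficient evaluation --- controlling the shuffle multiplicities $\kappa$ of two corner words for all splittings at once --- is the step I expect to be the main obstacle.

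Finally I would isolate the degenerate shuffles that fall outside the clean count above. When the two staircases merge without creating a second $\omega_1\cdots\omega_0$ descent the shuffle reassembles height–one words, and summing these reproduces $Z_+(p+2)$; the fully collapsed single–block terms contribute multiples of $\zeta(p+4)$ together with a copy of $Z_-(p+2)=-2\zeta(\ov{p+4})$, whose multiplicities should come out to $(p+2)$ when $p$ is even and $(p-1)$ when $p$ is odd (the $Z_-(p+2)$ term being absent for odd $p$, since $Z_-$ vanishes at odd arguments). Collecting the main count with these boundary contributions then yields \eqref{eq.abw}.
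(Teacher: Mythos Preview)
Your plan and the paper's proof both rest on the shuffle structure, but the implementations diverge. Rather than shuffling individual height-one words $\omega_1^{c+1}\omega_0^{d+1}$ and then carrying out a signed multiplicity count, the paper packages each factor into a single two-variable integral via Proposition~\ref{P2.2},
\[
Z_\pm(n)=\frac{1}{n!}\int_{E_2}\Bigl(\log\frac{1}{1-t_1}\pm\log\frac{t_2}{t_1}\Bigr)^{\!n}\frac{\mathrm dt_1\,\mathrm dt_2}{(1-t_1)t_2},
\]
so that the entire convolution $\sum_{m+n=p}(-1)^mZ_-(m)Z_+(n)$ is one $p$th-power integral over $E_2\times E_2$. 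The shuffle then reduces to cutting $E_2\times E_2$ into just six four-simplices $D_1,\ldots,D_6$, rewriting the logarithmic kernel on each, and reading off multiple zeta values through Proposition~\ref{P2.4}. The even/odd-depth dichotomy you anticipate appears automatically from the pairing $I(p;3)+I(p;5)$ (and $I(p;4)+I(p;6)$), where an explicit $(-1)^m$ kills the odd-$m$ contributions; the boundary pieces you describe---the height-one words reassembling into $Z_+(p+2)$ and the residual $\zeta(p+4)$ and $Z_-(p+2)$ terms---come out of $I(p;1)+I(p;2)$. Your word-level count should succeed in principle and would yield a more directly combinatorial explanation of the coefficient $2^{\gamma_D}$, but the compressed-integral route replaces what you flag as the ``main obstacle'' with six routine evaluations; that compression is the device you are missing.
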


\begin{theorem}\label{thm.2}
For any nonnegative integer $p$, we have
\begin{equation}\label{eq.adw}
\sum^p_{m=0}Z_-(2m)Z^\star_+(2p-2m)
=(2p+2)(2p+1+2^{-(p+2)})\zeta(2p+4).
\end{equation}
\end{theorem}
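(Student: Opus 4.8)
The plan is to prove Theorem~\ref{thm.2} by establishing generating-function identities for the four sums $Z_\pm(n)$ and $Z^\star_\pm(n)$ and then extracting the diagonal coefficient that corresponds to the convolution $\sum_{m=0}^p Z_-(2m)Z^\star_+(2p-2m)$. Since the left-hand side is a \emph{convolution} of an $\zeta$-series with a $\zeta^\star$-series, the natural strategy is to find closed forms for the generating series $F_-(t)=\sum_{n\geq 0}Z_-(n)t^n$ and $F^\star_+(t)=\sum_{n\geq 0}Z^\star_+(n)t^n$, multiply them, and read off the coefficient. The even-index restriction on both arguments (we sum $Z_-(2m)Z^\star_+(2p-2m)$) suggests working with the even parts, i.e. with $\tfrac12\bigl(F(t)+F(-t)\bigr)$, or equivalently passing to the variable $t^2$ after symmetrizing.

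\textbf{Step 1: generating functions for $Z_\pm$.} First I would compute $\sum_n Z_\pm(n)t^n$ from the height-one generating function \eqref{eq.gn1}. Writing $Z_\pm(n)=\sum_{a+b=n}(\pm1)^b\zeta(\{1\}^a,b+2)$, the double series in \eqref{eq.gn1} is a bookkeeping device whose diagonal terms collect exactly these sums; specializing $x,y$ along suitable lines (e.g. $x=t$, $y=\pm t$, or $x+y$ fixed) should convert the $\Gamma$-quotient into an explicit one-variable function. Because $\zeta(\{1\}^m,n+2)=\zeta(\{1\}^n,m+2)$ by duality, the substitution $y\mapsto -y$ interacts cleanly with the sign $(\pm1)^b$, and I expect $F_-(t)$ and $F_+(t)$ to be expressible through $\Gamma(1\mp t)$ and $\Gamma(1-t)^2/\Gamma(1-2t)$-type factors, consistent with the appearance of $\zeta(\ov{2w})$ and the factor $2^{-(p+2)}$ in the target.

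\textbf{Step 2: generating function for $Z^\star_+$.} Next I would derive $\sum_n Z^\star_+(n)t^n$ from the $\zeta^\star$ generating function \eqref{eq.gn2}. The diagonal sum $Z^\star_+(n)=\sum_{a+b=n}\zeta^\star(\{1\}^a,b+2)$ corresponds to setting $x=y=t$ in \eqref{eq.gn2} and reading off the coefficient of $t^n$; the resulting ${}_3F_2$ at $x=y=t$ should simplify, since Ohno's evaluation \eqref{eq.d.1} already tells us $Z^\star_+(w-2)=-2(w-1)\zeta(\ov{w})$, i.e. $Z^\star_+(n)=-2(n+1)\zeta(\ov{n+2})=2(n+1)(1-2^{-(n+1)})\zeta(n+2)$ for $n\geq 0$. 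Having this \emph{explicit} closed form for $Z^\star_+$ in hand is the key simplification: it removes the need to manipulate the hypergeometric series at all for this factor.

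\textbf{Step 3: assemble the convolution.} With $Z^\star_+(2k)=2(2k+1)(1-2^{-(2k+1)})\zeta(2k+2)$ known explicitly, the convolution becomes $\sum_{m=0}^p Z_-(2m)\,Z^\star_+(2p-2m)$, which I would evaluate by substituting the closed form for $Z^\star_+$ and a closed form or generating identity for $Z_-(2m)$. The alternating sum \eqref{eq.a.1}, namely $Z_-(2w-2)$-type relations giving $2\zeta(\ov{2w})$, provides the needed evaluation of the even-index values $Z_-(2m)$. The sum then reduces to a product of two series of alternating (or ordinary) zeta values of total weight $2p+4$; the weight-$2p+4$ homogeneity forces the answer to be a rational multiple of $\zeta(2p+4)$, and I would pin down the coefficient $(2p+2)(2p+1+2^{-(p+2)})$ by comparing it, weight-component by weight-component, with the stuffle/shuffle decomposition of $\zeta(2)\zeta(2p+2)$-type products or directly via the generating-function product from Steps 1--2.

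\textbf{Main obstacle.} The hardest part will be controlling the \emph{double} zeta and alternating-zeta products that arise when the two factor series are multiplied: a naive term-by-term product produces cross terms of the form $\zeta(\ov{2i})\zeta(2p+4-2i)$ (and their alternating variants), and collapsing these into the single value $\zeta(2p+4)$ requires a parity-sensitive cancellation. I expect the cleanest route is to \emph{avoid} expanding into individual products altogether by instead forming the product of the two generating functions from Steps 1--2, recognizing it as a single explicit function of $t$ (likely involving $\Gamma$-factors that telescope because of the $2^{-(p+2)}$ half-integer shift), and extracting the coefficient of $t^{2p}$ directly; verifying that this coefficient matches $(2p+2)(2p+1+2^{-(p+2)})\zeta(2p+4)$ is then a finite residue/Taylor computation rather than an infinite zeta-product manipulation.
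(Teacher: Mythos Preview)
Your approach is viable and genuinely different from the paper's. The paper does not touch generating functions here: it writes $Z_-(m)Z^\star_+(n)$ as a double integral over $[0,1]^2$ (using Eq.~\eqref{eq.aeuler} for each factor), decomposes the square into the two simplices $0<t<u<1$ and $0<u<t<1$, and obtains the convolution as a weighted sum of alternating double zeta values $\zeta(b+1,\ov{a+2})$. It then proves the separate weighted sum formula $\sum_{a+b=n}2^{b+1}\zeta(a+1,\ov{b+1})=(n+1)\zeta(n+2)+2\zeta(n+1,\ov1)+2\zeta(\ov{n+2})$ and substitutes. Your route---plug in the closed forms $Z_-(2m)=-2\zeta(\ov{2m+2})$ and $Z^\star_+(2k)=-2(2k+1)\zeta(\ov{2k+2})$ and work with the resulting one-variable series---bypasses all of that, at the cost of not producing the auxiliary alternating-double-sum identity as a byproduct.

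Two remarks on execution. First, the ``main obstacle'' you flag is lighter than you think. Because $Z^\star_+(2k)=(2k+1)Z_-(2k)$, a symmetrization $m\leftrightarrow p-m$ gives
\[
\sum_{m=0}^{p}Z_-(2m)Z^\star_+(2p-2m)=4(p+1)\sum_{m=0}^{p}\zeta(\ov{2m+2})\,\zeta(\ov{2p-2m+2}),
\]
so only a single convolution of even alternating zeta values remains; there are no mixed $\zeta(\ov{2i})\zeta(2j)$ terms and no parity-sensitive cancellation is needed. Second, the generating function you are looking for in Step~1 is exactly $\sum_{N\ge0}Z_-(N)t^{N+2}=\pi t\csc(\pi t)-1$ (set $x=t$, $y=-t$ in \eqref{eq.gn1} and use $\Gamma(1-t)\Gamma(1+t)=\pi t/\sin\pi t$). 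Squaring and using $\csc^2=1+\cot^2$ (equivalently, the classical identity $\sum_{k=1}^{n-1}\zeta(2k)\zeta(2n-2k)=(n+\tfrac12)\zeta(2n)$) immediately collapses the convolution of $\zeta(\ov{2i})$'s to the stated multiple of $\zeta(2p+4)$; no hypergeometric manipulation of \eqref{eq.gn2} is required once you invoke Ohno's formula \eqref{eq.d.1}, as you already noted.
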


On the other hand, we consider another integral
\[
  \frac{1}{p!} \int_{E_{2} \times E_{2}}
    \left( \log \frac{t_{2}}{t_{1}} + \lambda \log \frac{u_{2}}{u_{1}} \right)^{p} \frac{\mathrm{d}t_{1} \mathrm{d}t_{2}}{(1-t_{1})t_{2}} \frac{\mathrm{d}u_{1} \mathrm{d}u_{2}}{(1-u_{1})u_{2}}.
\]
Indeed, this integral is the integral representation of the weighted convolution of 
$$
\sum_{m+n=p}\lambda^n\zeta(m+2)\zeta(n+2).
$$
Under the same procedure, we obtain a weighted sum formula on Euler double sums.

\begin{theorem} \label{T1.2}
Let $p$ be a nonnegative integer and $\lambda$ a real number with $\lambda \neq 1$. Then
\begin{equation}\label{eq.l.2}
  \sum_{\alpha_{1}+\alpha_{2}=p+3}
    \big[ (\lambda^{\alpha_{1}-1}+1) (\lambda+1)^{\alpha_{2}}
    - \lambda^{\alpha_{1}-1} - \lambda^{\alpha_{2}} \big] \zeta(\alpha_{1},\alpha_{2}+1)
  = \frac{\lambda^{p+3}-1}{\lambda-1} \zeta(p+4).
\end{equation}
\end{theorem}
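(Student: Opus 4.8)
The plan is to prove Theorem~\ref{T1.2} by realizing its left-hand side as the integral displayed just above the statement, namely
\[
  I(\lambda)=\frac{1}{p!}\int_{E_2\times E_2}
  \left(\log\frac{t_2}{t_1}+\lambda\log\frac{u_2}{u_1}\right)^p
  \frac{\mathrm{d}t_1\,\mathrm{d}t_2}{(1-t_1)t_2}
  \frac{\mathrm{d}u_1\,\mathrm{d}u_2}{(1-u_1)u_2},
\]
and then evaluating this integral in two different ways. First I would expand the $p$-th power by the binomial theorem and recognize each factor $\frac{1}{m!}\bigl(\log\frac{t_2}{t_1}\bigr)^m\frac{\mathrm{d}t_1\,\mathrm{d}t_2}{(1-t_1)t_2}$ over $E_2$ as the iterated-integral representation of $\zeta(m+2)$, using the fact that inserting a factor $\log(t_2/t_1)$ corresponds to inserting extra $\mathrm{d}t/t$ forms (this is the standard device behind the generating identity~\eqref{eq.gn1}). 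This identifies $I(\lambda)$ with the weighted convolution $\sum_{m+n=p}\lambda^n\zeta(m+2)\zeta(n+2)$, confirming the claim made in the text and giving the ``sum side'' interpretation.

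The second, harder evaluation is to compute the same integral $I(\lambda)$ by a direct change of variables that unfolds it onto a single simplex and collapses the product structure. The key step is to combine the two two-dimensional simplices $E_2\times E_2$ into expressions governed by a single ordering variable, exploiting that both inner integrands have the same one-form shape $\frac{\mathrm{d}t}{1-t}\cdot\frac{\mathrm{d}t}{t}$. Concretely, I would introduce the shuffle of the two differential words: since $\int_{E_2}\omega_a\omega_b\cdot\int_{E_2}\omega_c\omega_d=\int(\omega_a\omega_b)\shuffle(\omega_c\omega_d)$ over a single copy of $E_4$, and since $\log(t_2/t_1)$ and $\lambda\log(u_2/u_1)$ track which simplex each $\frac{\mathrm{d}t}{t}$ form came from, the weighting $(\cdots)^p/p!$ redistributes the combinatorics into the bracketed polynomial coefficient $\bigl[(\lambda^{\alpha_1-1}+1)(\lambda+1)^{\alpha_2}-\lambda^{\alpha_1-1}-\lambda^{\alpha_2}\bigr]$ multiplying $\zeta(\alpha_1,\alpha_2+1)$. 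The $+1$ added to the last argument reflects the extra $\frac{\mathrm{d}t}{t}$ form that the logarithm weighting contributes at the top of the simplex.

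The main obstacle I anticipate is bookkeeping the shuffle carefully enough to produce exactly the four-term bracket. The shuffle of two length-two words yields $\binom{4}{2}=6$ interleavings, which split into double zeta values $\zeta(\alpha_1,\alpha_2+1)$ with various weightings in $\lambda$; the terms $(\lambda^{\alpha_1-1}+1)(\lambda+1)^{\alpha_2}$ should arise from the ``fully mixed'' interleavings where $\log$-insertions from both copies contribute, while the subtracted $\lambda^{\alpha_1-1}+\lambda^{\alpha_2}$ corrects for the degenerate interleavings in which one simplex's forms all precede the other's (these reduce to products that telescope). Tracking the power of $\lambda$ attached to each interleaving is the crux, since $\lambda$ marks the $u$-copy and each $\frac{\mathrm{d}u}{u}$ absorbed by the logarithm raises the $\lambda$-degree.

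Finally, to close the argument I would match the two evaluations. The right-hand side $\frac{\lambda^{p+3}-1}{\lambda-1}\zeta(p+4)=\bigl(\sum_{k=0}^{p+2}\lambda^k\bigr)\zeta(p+4)$ is precisely the weighted sum $\sum_{m+n=p}\lambda^n\zeta(m+2)\zeta(n+2)$ after applying the classical Euler reduction of $\zeta(m+2)\zeta(n+2)$ into single and double zeta values of weight $p+4$; alternatively one can verify it by checking that both sides agree as polynomials in $\lambda$ of degree $p+2$ at sufficiently many values (or that their difference is divisible by $\lambda-1$ yet has the wrong degree to be nonzero). The restriction $\lambda\neq 1$ only matters for writing the closed form $\frac{\lambda^{p+3}-1}{\lambda-1}$; the identity extends to $\lambda=1$ by continuity, recovering the ordinary weighted sum formula as a limiting case.
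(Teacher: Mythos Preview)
Your setup matches the paper's: start from the integral $I(\lambda)$, expand binomially to identify it with the weighted convolution $\sum_{m+n=p}\lambda^n\zeta(m+2)\zeta(n+2)$, then shuffle $E_2\times E_2$ into the six ordered simplices. The gap is in your closing step. You assert that the convolution $\sum_{m+n=p}\lambda^n\zeta(m+2)\zeta(n+2)$ becomes $\frac{\lambda^{p+3}-1}{\lambda-1}\zeta(p+4)$ after applying Euler's reduction; this is false. Already for $p=0$ the convolution is $\zeta(2)^2=\tfrac52\zeta(4)$, independent of $\lambda$, while the claimed right-hand side is $(1+\lambda+\lambda^2)\zeta(4)$. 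A polynomial of degree $p+2$ in $\lambda$ cannot equal a polynomial of degree $p$ in $\lambda$ for all $\lambda$, so no amount of Euler reduction will rescue that equality.

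What actually happens is a cancellation you have not accounted for. Of the six simplices, the two ``degenerate'' ones $D_1:t_1<t_2<u_1<u_2$ and $D_2:u_1<u_2<t_1<t_2$ do \emph{not} merely produce the subtracted correction terms in the bracket. On those pieces the integrand yields $\sum\lambda^n\zeta(m+2,n+2)$ and $\sum\lambda^n\zeta(n+2,m+2)$, and the reflection formula $\zeta(u,v)+\zeta(v,u)=\zeta(u)\zeta(v)-\zeta(u+v)$ turns their sum into the \emph{entire convolution again}, minus $\frac{\lambda^{p+1}-1}{\lambda-1}\zeta(p+4)$. This reproduced convolution cancels against the binomial-expansion side, and only then do the four genuinely mixed simplices $D_3,\ldots,D_6$ supply the bracketed weight $(\lambda^{\alpha_1-1}+1)(\lambda+1)^{\alpha_2}-\lambda^{\alpha_1-1}-\lambda^{\alpha_2}$, together with residual $\zeta(p+4)$ terms that assemble into $\frac{\lambda^{p+3}-1}{\lambda-1}\zeta(p+4)$. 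In particular the subtracted monomials $-\lambda^{\alpha_1-1}$ and $-\lambda^{\alpha_2}$ arise from summing finite geometric series inside $D_3,\ldots,D_6$, not from the degenerate interleavings as you suggest.
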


When $\lambda = 0$, Theorem~\ref{T1.2} gives the sum formula \cite{G96}
\[
  \sum_{\alpha_{1}+\alpha_{2}=p+3} \zeta(\alpha_{1},\alpha_{2}+1)
  = \zeta(p+4),
\]
and it is easy to see that
\[
  \sum_{\alpha_{1}+\alpha_{2}=p+3} 2^{\alpha_{2}}
    \zeta(\alpha_{1},\alpha_{2}+1)
  = \frac{p+5}{2} \zeta(p+4)
\]
when $\lambda \to 1$. This weighted sum formula was first derived by Ohno and Zudilin \cite{OZ08} in 2008. Let $\lambda=-1$, this gives another interesting formula
\begin{equation}
\sum_{\alpha_1+\alpha_2=2p+3}(-1)^{\alpha_1}\zeta(\alpha_1,\alpha_2+1)
=\frac12\zeta(2p+4).
\end{equation}

This paper is organized as follows: In Section~\ref{S2}, 
we give some preliminaries about the shuffle product formulas and 
the stuffle relations on the multiple zeta(-star) values and the 
alternating mutiple zeta values.  We also list some propositions 
that will be used frequently later. In the next section, we gives the evaluations 
of $Z_{\pm}(n)$ and $Z^\star_{\pm}(n)$.
We also give the generating function of $\zeta^\star(\{1\}^m,n+2)$
in Section~\ref{sec.3}. We prove Theorem~\ref{T1.1} in Section~\ref{S4},
and Theorem~\ref{thm.2} in Section~\ref{sec.5}.
We also prove a simple weighted sum formula for an alternating Euler double sums:
$$
\sum_{a+b=n}2^{b+1}\zeta(a+1,\ov{b+1})
=(n+1)\zeta(n+2)+2\zeta(n+1,\ov{1})+2\zeta(\ov{n+2}).
$$
We give a proof of Theorem~\ref{T1.2} in the final section.
\vspace{.5truecm}
\section{Some preliminaries and auxiliary tools} \label{S2}
For the arguments of multiple zeta values, we hereafter use the lowercase English letters and lowercase Greek letters, with or without subscripts, to denote the nonnegative and positive integers, unless otherwise specified. For instance,
\[
  \sum_{a+b=3} (-1)^{b} \zeta(\{1\}^{a},b+2)
  = -\zeta(5) + \zeta(1,4) - \zeta(1,1,3) + \zeta(1,1,1,2)
\]
and
\[
  \sum_{\mid\boldsymbol{\alpha}\mid=4} \zeta(\alpha_{1}, \alpha_{2}, \alpha_{3}+1)
  = \zeta(1,1,3) + \zeta(1,2,2) + \zeta(2,1,2).
\]

The shuffle product formula of two multiple zeta values is defined as
\[
  \int_{0}^{1} \Omega_{1} \Omega_{2} \cdots \Omega_{m}
    \int_{0}^{1} \Omega_{m+1} \Omega_{m+2} \cdots \Omega_{m+n}
  = \sum_{\sigma} \int_{0}^{1} \Omega_{\sigma(1)} \Omega_{\sigma(2)} \cdots
    \Omega_{\sigma(m+n)},
\]
where the sum is taken over all $\binom{m+n}{m}$ permutations $\sigma$ on the set $\{ 1,2,\ldots,m+n \}$ which preserve the orders of $\Omega_{1} \Omega_{2} \cdots \Omega_{m}$ and $\Omega_{m+1} \Omega_{m+2} \cdots \Omega_{m+n}$. More precisely, for all $1 \leq i < j \leq m$ and $m+1 \leq i < j \leq m+n$, we have
\[
  \sigma^{-1}(i) < \sigma^{-1}(j).
\]

Indeed, it is a hard job to perform shuffle products of multiple zeta values owing to number of variables in their iterated integrals maybe very large. The shuffle product of two multiple zeta values of weight $m$ and $n$ is equivalent to finding all possible interlacings of two sets of variables
\[
  0 < t_{1} < t_{2} < \cdots < t_{m} < 1 \quad \textrm{and} \quad
  0 < u_{1} < u_{2} < \cdots < u_{n} < 1.
\]
In the case of the shuffle product of $\zeta(2)$ and $\zeta(2)$, we have to find all interlacings of
\[
  0 < t_{1} < t_{2} < 1 \quad \textrm{and} \quad 0 < u_{1} < u_{2} < 1.
\]
There are six simplices of dimension $4$:
\begin{gather*}
  D_{1}: 0 < t_{1} < t_{2} < u_{1} < u_{2} < 1, \quad
  D_{2}: 0 < u_{1} < u_{2} < t_{1} < t_{2} < 1, \\
  D_{3}: 0 < t_{1} < u_{1} < t_{2} < u_{2} < 1, \quad
  D_{4}: 0 < t_{1} < u_{1} < u_{2} < t_{2} < 1, \\
  D_{5}: 0 < u_{1} < t_{1} < u_{2} < t_{2} < 1 \quad \textrm{and} \quad
  D_{6}: 0 < u_{1} < t_{1} < t_{2} < u_{2} < 1.
\end{gather*}
They produce $\zeta(2,2)$ on $D_{1}$ and $D_{2}$, and produce $\zeta(1,3)$ on $D_{3}, D_{4}, D_{5}, D_{6}$. So the resulting shuffle relation is
\[
  \zeta(2) \zeta(2) = 2\zeta(2,2) + 4\zeta(1,3).
\]

To overcome the difficulty of shuffle products, we first reduce the number of variables in the iterated integrals of multiple zeta values. Here we list some known results that will be used in subsequent sections.

\begin{proposition} \textup{\cite{ELO09}} \label{P2.1}
For any nonnegative integer $p$, we have
\[
  \zeta(p+2)
  = \frac{1}{(p+1)!} \int_{0}^{1} \left( \log \frac{1}{t} \right)^{p+1}
    \frac{\mathrm{d}t}{1-t}.
\]
\end{proposition}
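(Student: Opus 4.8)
The plan is to reduce the $(p+2)$-fold iterated integral in the representation \eqref{e1.1} of $\zeta(p+2)$ to a single integral by integrating out the inner variables $t_2,\ldots,t_{p+2}$. Concretely, starting from
\[
  \zeta(p+2) = \int_{E_{p+2}} \frac{\mathrm{d}t_1}{1-t_1} \prod_{k=2}^{p+2} \frac{\mathrm{d}t_k}{t_k}, \qquad E_{p+2}: 0 < t_1 < t_2 < \cdots < t_{p+2} < 1,
\]
I would fix $t_1$ and evaluate the inner integral over the chain $t_1 < t_2 < \cdots < t_{p+2} < 1$ against the purely multiplicative measures $\mathrm{d}t_k/t_k$. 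The substitution $s_k = \log t_k$ turns each $\mathrm{d}t_k/t_k$ into $\mathrm{d}s_k$ and carries this chain to the ordered simplex $\log t_1 < s_2 < \cdots < s_{p+2} < 0$.

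The key step is the volume computation for this ordered simplex. Since the $(p+1)!$ permutations of the coordinates $(s_2,\ldots,s_{p+2})$ partition the cube $(\log t_1, 0)^{p+1}$ into $(p+1)!$ simplices of equal volume, and the cube has volume $\bigl(-\log t_1\bigr)^{p+1} = \bigl(\log\frac1{t_1}\bigr)^{p+1}$, each ordered simplex has volume $\frac{1}{(p+1)!}\bigl(\log\frac1{t_1}\bigr)^{p+1}$. Substituting this evaluation of the inner integrals back into the representation gives
\[
  \zeta(p+2) = \frac{1}{(p+1)!} \int_0^1 \Bigl(\log\frac1{t_1}\Bigr)^{p+1} \frac{\mathrm{d}t_1}{1-t_1},
\]
which, after renaming $t_1$ to $t$, is exactly the claimed identity.

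The computation is elementary, so there is no serious obstacle; the only point requiring care is justifying the simplex-volume identity, which the symmetry argument above settles cleanly (alternatively by an easy induction on $p$, peeling off one variable at a time). As an independent check — and an alternative self-contained proof — I would expand $\frac{1}{1-t} = \sum_{n=0}^{\infty} t^n$ on $[0,1)$ and integrate term by term, which is legitimate by monotone convergence because the integrand is nonnegative. The substitution $u = \log\frac1t$ reduces each term to the Gamma integral $\int_0^\infty u^{p+1} e^{-(n+1)u}\,\mathrm{d}u = (p+1)!/(n+1)^{p+2}$, and summing over $n$ recovers $(p+1)!\,\zeta(p+2)$, giving the same conclusion.
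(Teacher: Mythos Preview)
Your argument is correct: both the simplex-volume reduction of the iterated integral and the termwise integration against the geometric series are valid and standard derivations of this identity. Note that the paper does not supply its own proof of this proposition; it is quoted from \cite{ELO09} as a known tool, so there is no in-paper argument to compare against.
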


If we change the integral operators $\Omega_j$ in Eq.\,(\ref{eq.domain}) as follows.
\begin{equation}
  \Omega_{j} = \begin{cases}
  \dfrac{-\mathrm{d}t_{j}}{1+t_{j}}
    &\textrm{if $j = 1, \alpha_{1}+1, \alpha_{1}+\alpha_{2}+1, \ldots, \alpha_{1}+\alpha_{2}+\cdots+\alpha_{r-1}+1$}, \\
  \dfrac{\mathrm{d}t_{j}}{t_{j}} &\textrm{otherwise}. \\
  \end{cases}
\end{equation}
The corresponding alternating multiple zeta value is represented as
\begin{equation}\label{eq.bar}
\int_{E_{\mid\bm\alpha\mid}}\Omega_1\Omega_2\cdots \Omega_{\mid\bm\alpha\mid}
=\zeta(\alpha_1,\ldots,\alpha_{r-1},\ov{\alpha_r}).
\end{equation}
Then we have a similar result on alternating Euler sums.
\begin{equation}\label{eq.aeuler}
\zeta(\ov{p+1})=\frac{-1}{p!}\int^1_0\left(\log\frac1t\right)^p\frac{dt}{1+t}.
\end{equation}
In fact, this integral representation of $\zeta(\ov{p+1})$ and the following representation
of $\zeta(\{1\}^m,n+1)$ also can derived from 
\cite[Theorem 2.1]{Xu2019}.
\begin{equation}
\zeta(\{1\}^m,\ov{n+1})
=\frac{(-1)^{m+n+1}}{n!m!}\int^1_0\log^m(1+t)\log^n(t)\frac{dt}{1+t}.
\end{equation}

\begin{proposition} \textup{\cite{ELO09}} \label{P2.2}
For a pair of nonnegative integers $m$ and $n$, we have
\[
  \zeta(\{1\}^{m},n+2)
  = \frac{1}{m!n!} \int_{0 < t_{1} < t_{2} < 1}
    \left( \log \frac{1}{1-t_{1}} \right)^{m}
    \left( \log \frac{t_{2}}{t_{1}} \right)^{n}
    \frac{\mathrm{d}t_{1} \mathrm{d}t_{2}}{(1-t_{1})t_{2}}.
\]
\end{proposition}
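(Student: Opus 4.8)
The plan is to collapse the iterated-integral representation
$$
\zeta(\{1\}^{m},n+2)=\int_{E_{m+n+2}}\prod_{j=1}^{m+1}\frac{\mathrm{d}t_{j}}{1-t_{j}}\prod_{k=m+2}^{m+n+2}\frac{\mathrm{d}t_{k}}{t_{k}}
$$
recorded above to a two-dimensional integral by integrating out all but two carefully chosen ``junction'' variables. The measure $\frac{\mathrm{d}t_{1}\,\mathrm{d}t_{2}}{(1-t_{1})t_{2}}$ and the weights in the target identity dictate the choice: the factor $\big(\log\frac{1}{1-t_{1}}\big)^{m}$ must come from the $m$ forms of type $\frac{\mathrm{d}t}{1-t}$ lying below a retained $\frac{\mathrm{d}t}{1-t}$-variable, which forces that retained variable to be the $(m+1)$-st one, $t_{m+1}$; similarly $\big(\log\frac{t_{2}}{t_{1}}\big)^{n}$ must come from the $n$ forms of type $\frac{\mathrm{d}t}{t}$ trapped between $t_{m+1}$ and the outermost variable, forcing the second retained variable to be $t_{m+n+2}$.

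Concretely, I would set $x:=t_{m+1}$ and $y:=t_{m+n+2}$, fix these with $x<y$, and apply Fubini. The surviving integrations then separate into two decoupled blocks: the block $0<t_{1}<\cdots<t_{m}<x$ carrying the forms $\frac{\mathrm{d}t_{j}}{1-t_{j}}$, and the block $x<t_{m+2}<\cdots<t_{m+n+1}<y$ carrying the forms $\frac{\mathrm{d}t_{k}}{t_{k}}$. They are independent because the first is supported on $(0,x)$ and the second on $(x,y)$.

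Each block is then evaluated by the elementary ordered-simplex identity
$$
\int_{a<x_{1}<\cdots<x_{k}<b}\prod_{i=1}^{k}f(x_{i})\,\mathrm{d}x_{i}=\frac{1}{k!}\Big(\int_{a}^{b}f(x)\,\mathrm{d}x\Big)^{k},
$$
which is the same symmetrization underlying Proposition~\ref{P2.1}. Taking $f(t)=\frac{1}{1-t}$ on $(0,x)$ yields $\frac{1}{m!}\big(\log\frac{1}{1-x}\big)^{m}$, and taking $f(t)=\frac{1}{t}$ on $(x,y)$ yields $\frac{1}{n!}\big(\log\frac{y}{x}\big)^{n}$. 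Multiplying back the two retained differentials $\frac{\mathrm{d}x}{1-x}$ and $\frac{\mathrm{d}y}{y}$ and relabeling $(x,y)\to(t_{1},t_{2})$ produces exactly the asserted formula.

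Since every step is a routine use of Fubini and the simplex-volume identity, I do not anticipate a serious obstacle; the only delicate point is the bookkeeping of which two variables to retain. One must verify that the two integrated blocks genuinely decouple---which they do, being supported on the disjoint intervals $(0,x)$ and $(x,y)$---and that retaining $t_{m+1}$ and $t_{m+n+2}$ reproduces the precise measure and logarithmic weights of the statement rather than one of the other (equal but differently presented) two-variable reductions.
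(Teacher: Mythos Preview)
Your proposal is correct. The paper itself does not supply a proof of Proposition~\ref{P2.2}; it is quoted as a known result from \cite{ELO09}. Your argument---retaining the two junction variables $t_{m+1}$ and $t_{m+n+2}$ from the Kontsevich iterated integral and collapsing the two intervening blocks via the symmetrization identity $\int_{a<x_{1}<\cdots<x_{k}<b}\prod_{i}f(x_{i})\,\mathrm{d}x_{i}=\frac{1}{k!}\big(\int_{a}^{b}f\big)^{k}$---is exactly the standard derivation one finds in that reference and elsewhere, so there is nothing to contrast.
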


\begin{proposition} \textup{\cite{ELO09}} \label{P2.3}
For nonnegative integers $b_{1}, b_{2}, \ldots, b_{r}, b_{r+1}$, we have
\begin{multline*}
  \zeta(b_{1}+1, b_{2}+1, \ldots, b_{r}+1, b_{r+1}+2)
  = \frac{1}{b_{1}! b_{2}! \cdots b_{r+1}!} \int_{E_{r+2}} \prod_{j=1}^{r+1}
    \frac{\mathrm{d}t_{j}}{1-t_{j}} \left( \log \frac{t_{j+1}}{t_{j}} \right)^{b_{j}} \frac{\mathrm{d}t_{r+2}}{t_{r+2}},
\end{multline*}
where $E_{r+2}$ is a simplex defined as $0 < t_{1} < t_{2} < \cdots < t_{r+2}$.
\end{proposition}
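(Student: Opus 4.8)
The plan is to start from the Kontsevich iterated-integral representation \eqref{e1.1}--\eqref{eq.domain} of
\[
  \zeta(b_{1}+1,b_{2}+1,\ldots,b_{r}+1,b_{r+1}+2)
\]
over the simplex $E_{N}$ with $N=\mid\bm\alpha\mid=b_{1}+\cdots+b_{r+1}+r+2$, and to collapse the $\mathrm{d}t/t$ factors block by block into the logarithmic powers on the right-hand side. In this representation the forms $\mathrm{d}t/(1-t)$ sit precisely at the $r+1$ block-start positions $1,\alpha_{1}+1,\ldots,\alpha_{1}+\cdots+\alpha_{r}+1$ (with $\alpha_{i}=b_{i}+1$ for $i\le r$ and $\alpha_{r+1}=b_{r+1}+2$), while every other position carries $\mathrm{d}t/t$; between the $j$-th and $(j+1)$-th block-start there are exactly $b_{j}$ factors $\mathrm{d}t/t$ for $1\le j\le r$, and above the last block-start there are $b_{r+1}+1$ such factors.

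First I would single out $r+2$ of the $N$ variables to keep as outer variables: the $r+1$ variables sitting at the block-start positions, renamed $t_{1}<\cdots<t_{r+1}$, together with the topmost variable of the whole simplex, renamed $t_{r+2}$. The remaining $b_{1}+\cdots+b_{r+1}$ variables are the inner variables to be integrated out. The one elementary fact I need is that for $0<a<b$ and $k\ge 0$,
\begin{equation*}
  \int_{a<x_{1}<\cdots<x_{k}<b}\prod_{i=1}^{k}\frac{\mathrm{d}x_{i}}{x_{i}}
  =\frac{1}{k!}\left(\log\frac{b}{a}\right)^{k},
\end{equation*}
which follows at once from the substitution $x_{i}=e^{y_{i}}$ turning the chain into the Euclidean simplex $\log a<y_{1}<\cdots<y_{k}<\log b$ of volume $(\log(b/a))^{k}/k!$.

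Next I would apply this identity one block at a time. For $1\le j\le r$ the $b_{j}$ inner variables of block $j$ are squeezed between the consecutive outer variables $t_{j}$ and $t_{j+1}$, so integrating them out yields the factor $\frac{1}{b_{j}!}(\log(t_{j+1}/t_{j}))^{b_{j}}$; for the last block the $b_{r+1}$ inner variables lie between $t_{r+1}$ and $t_{r+2}$, the single remaining $\mathrm{d}t/t$ becoming the outer $\mathrm{d}t_{r+2}/t_{r+2}$, giving $\frac{1}{b_{r+1}!}(\log(t_{r+2}/t_{r+1}))^{b_{r+1}}$. Because the inner variables of distinct blocks occupy disjoint coordinate ranges whose only constraints are the bounding outer variables, Fubini's theorem lets the full inner integral factor as the product of these block contributions; reinstating the kept forms $\mathrm{d}t_{j}/(1-t_{j})$ and $\mathrm{d}t_{r+2}/t_{r+2}$ over $E_{r+2}$ then yields exactly the asserted formula. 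As a sanity check, the case $r=0$ recovers Proposition~\ref{P2.1} after the final $t_{2}$-integration.

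The routine but delicate part will be the index bookkeeping that aligns the block-start positions with the arguments $b_{j}+1$ and confirms that the top block carries $b_{r+1}+1$ (rather than $b_{r+1}$) factors $\mathrm{d}t/t$, exactly one of which survives as the outermost $\mathrm{d}t_{r+2}/t_{r+2}$; everything else is the clean Fubini factorization on the nested simplex. One could equivalently phrase this as an induction peeling off a single inner $\mathrm{d}t/t$ integration at a time, but the simultaneous block collapse above is the most transparent route.
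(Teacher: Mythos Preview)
Your argument is correct and is the standard derivation: start from the Kontsevich iterated integral \eqref{e1.1}--\eqref{eq.domain} for $\zeta(b_{1}+1,\ldots,b_{r}+1,b_{r+1}+2)$, retain the $r+1$ variables carrying $\mathrm{d}t/(1-t)$ together with the top variable, and collapse each intermediate string of $\mathrm{d}t/t$'s via
\[
  \int_{a<x_{1}<\cdots<x_{k}<b}\prod_{i=1}^{k}\frac{\mathrm{d}x_{i}}{x_{i}}
  =\frac{1}{k!}\Bigl(\log\frac{b}{a}\Bigr)^{k}.
\]
Your bookkeeping on the last block is right: of the $b_{r+1}+1$ forms $\mathrm{d}t/t$ following the final $\mathrm{d}t/(1-t)$, exactly one is retained as the outer $\mathrm{d}t_{r+2}/t_{r+2}$ and the remaining $b_{r+1}$ integrate to $\frac{1}{b_{r+1}!}(\log(t_{r+2}/t_{r+1}))^{b_{r+1}}$.

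Note, however, that the paper does not supply its own proof of Proposition~\ref{P2.3}; it is quoted from \cite{ELO09} and used as a tool. So there is no in-paper argument to compare against. Your approach is exactly the one underlying the cited reference, and your write-up could serve as a self-contained proof here.
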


\begin{proposition} \textup{\cite{ELO09}} \label{P2.4}
For nonnegative integers $p$, $q$, $r$ and $\ell$, we have
\begin{align*}
  &\sum_{\mid\boldsymbol{\alpha}\mid=q+r+1}
    \zeta(\{1\}^{p}, \alpha_{1}, \alpha_{2}, \ldots, \alpha_{q}, \alpha_{q+1}+\ell+1) \\
  &= \frac{1}{p!q!r!\ell!} \int_{0 < t_{1} < t_{2} < 1}
    \left( \log \frac{1}{1-t_{1}} \right)^{p}
    \left( \log \frac{1-t_{1}}{1-t_{2}} \right)^{q}
    \left( \log \frac{t_{2}}{t_{1}} \right)^{r}
    \left( \log \frac{1}{t_{2}} \right)^{\ell}
    \frac{\mathrm{d}t_{1} \mathrm{d}t_{2}}{(1-t_{1})t_{2}}.
\end{align*}
In particular,
$$
\sum_{\mid\boldsymbol{\alpha}\mid=q+r+1}
    \zeta(\alpha_{1}, \alpha_{2}, \ldots, \alpha_{q}, \alpha_{q+1}+1) 
= \frac{1}{q!r!} \int_{0 < t_{1} < t_{2} < 1}
    \left( \log \frac{1-t_{1}}{1-t_{2}} \right)^{q}
    \left( \log \frac{t_{2}}{t_{1}} \right)^{r}
    \frac{\mathrm{d}t_{1} \mathrm{d}t_{2}}{(1-t_{1})t_{2}}.
$$
\end{proposition}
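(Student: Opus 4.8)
The plan is to argue directly from the iterated-integral representation \eqref{e1.1}--\eqref{eq.domain}, using the shuffle (interlacing) structure of simplex integrals so that the sum over all compositions of fixed weight reassembles a product of nested integrals on the two-dimensional simplex $0<t_1<t_2<1$. Rather than first collapsing blocks as in Proposition~\ref{P2.3}, I keep the raw differential word, which makes the interlacing combinatorics transparent.

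First I would read off the word. Write $W=p+q+r+\ell+2$ for the common weight of the summands. For a fixed composition $\boldsymbol{\alpha}=(\alpha_1,\dots,\alpha_{q+1})$ with $|\boldsymbol{\alpha}|=q+r+1$, the integrand of $\zeta(\{1\}^p,\alpha_1,\dots,\alpha_q,\alpha_{q+1}+\ell+1)$ on $E_W$ is a product $\Omega_1\cdots\Omega_W$ in which each $\Omega_j$ is either of \emph{type $1$} (a factor $\tfrac{\mathrm{d}t}{1-t}$, occurring at every block start) or of \emph{type $t$} (a factor $\tfrac{\mathrm{d}t}{t}$). Counting shows there are $p+q+1$ forms of type $1$ and $r+\ell+1$ of type $t$, independently of $\boldsymbol{\alpha}$. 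The structural heart is to single out two surviving variables: let $t_1$ carry the $(p+1)$-st type-$1$ form (the start of the $\alpha_1$-block), so that exactly the $p$ leading type-$1$ forms sit below it; and let $t_2$ occupy position $W-\ell$. Since $\alpha_{q+1}+\ell\ge \ell+1$, the last $\ell+1$ forms all lie inside the final block and are of type $t$, so $t_2$ is of type $t$ with exactly $\ell$ type-$t$ forms above it. A count then shows the $q+r$ forms strictly between $t_1$ and $t_2$ comprise exactly $q$ of type $1$ (the starts of $\alpha_2,\dots,\alpha_{q+1}$) and $r$ of type $t$; crucially, the positions of $t_1,t_2$ and of the $p$ leading and $\ell$ trailing forms do \emph{not} depend on $\boldsymbol{\alpha}$, only the interleaving of the $q+r$ middle forms does.

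I expect the main obstacle to be making the following bijection precise: compositions $\boldsymbol{\alpha}$ with $|\boldsymbol{\alpha}|=q+r+1$ and $q+1$ positive parts correspond exactly to the $\binom{q+r}{q}$ ways of interleaving the $q$ middle type-$1$ forms with the $r$ middle type-$t$ forms (reading upward from $t_1$, the gaps between successive type-$1$ forms record the numbers $\alpha_i-1$). Granting this, summing the simplex integrals over all $\boldsymbol{\alpha}$ is, for fixed values $t_1<t_2$, the shuffle identity in reverse: the ordered middle simplices tile the product region up to measure zero, so the summed middle integral factors as
\[
  \Bigl(\int_{t_1<u_1<\cdots<u_q<t_2}\prod_{i=1}^q\frac{\mathrm{d}u_i}{1-u_i}\Bigr)
  \Bigl(\int_{t_1<v_1<\cdots<v_r<t_2}\prod_{j=1}^r\frac{\mathrm{d}v_j}{v_j}\Bigr).
\]

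Finally I would evaluate the four nested integrals using $\int_{a<x_1<\cdots<x_k<b}\prod \mathrm{d}f(x_i)=\tfrac1{k!}\bigl(f(b)-f(a)\bigr)^k$: the $p$ leading type-$1$ forms give $\tfrac1{p!}(\log\tfrac1{1-t_1})^p$, the two middle products give $\tfrac1{q!}(\log\tfrac{1-t_1}{1-t_2})^q$ and $\tfrac1{r!}(\log\tfrac{t_2}{t_1})^r$, and the $\ell$ trailing type-$t$ forms give $\tfrac1{\ell!}(\log\tfrac1{t_2})^\ell$. Multiplying these by the surviving forms $\tfrac{\mathrm{d}t_1}{1-t_1}$ and $\tfrac{\mathrm{d}t_2}{t_2}$ and integrating over $0<t_1<t_2<1$ yields the claimed identity; the displayed special case is simply $p=\ell=0$, in which the two outer logarithmic factors reduce to $1$.
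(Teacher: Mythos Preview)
Your argument is correct. The paper itself does not prove this proposition; it is quoted from \cite{ELO09} as one of several integral representations used as tools. Your approach---singling out the $(p{+}1)$-st type-$1$ variable and the $(W{-}\ell)$-th variable as the surviving $t_1,t_2$, then recognising that the sum over compositions $\boldsymbol{\alpha}$ with $|\boldsymbol{\alpha}|=q+r+1$ exactly enumerates the $\binom{q+r}{q}$ shuffles of the $q$ intermediate type-$1$ forms with the $r$ intermediate type-$t$ forms---is precisely the mechanism behind the results of \cite{ELO09} and is the same device the present paper deploys in its own proofs (e.g.\ the derivation of Eq.~\eqref{eq.2barm} via Proposition~\ref{P2.3}). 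The bijection you describe is correct: the run-lengths of type-$t$ forms between successive type-$1$ markers (with $t_1$ and $t_2$ as implicit boundary markers) are $\alpha_1-1,\dots,\alpha_{q+1}-1$, and these range freely over nonnegative integers summing to $r$. The final evaluation of the four nested simplicial integrals via $\frac{1}{k!}(f(b)-f(a))^k$ is routine.

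One small point of exposition: when you write ``the positions of $t_1,t_2$ \dots\ do not depend on $\boldsymbol{\alpha}$,'' it is worth noting explicitly that position $W-\ell$ is always of type $t$ because $\alpha_{q+1}\ge 1$ forces the last block to contain at least $\ell+1$ terminal type-$t$ forms; you say this, but it is the one place where a reader might pause.
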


The shuffle product of two Riemann zeta values can give 
the well-known Euler decomposition formula \cite{EW13}.

\begin{proposition}[Euler decomposition formula]\label{T2.5}
For a pair of positive integers $p$ and $q$, we have
\begin{equation}
  \zeta(p+1) \zeta(q+1)
  = \sum_{\alpha_{1}+\alpha_{2} = p+q+1}
    \left[ \binom{\alpha_{2}}{p} + \binom{\alpha_{2}}{q} \right] \zeta(\alpha_{1},\alpha_{2}+1).
\end{equation}
\end{proposition}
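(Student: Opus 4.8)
The plan is to realize both factors as iterated integrals over the standard simplex and then read off the claimed identity directly from the shuffle product, exactly as in the $\zeta(2)\zeta(2)=2\zeta(2,2)+4\zeta(1,3)$ computation recalled above. By \eqref{e1.1}--\eqref{eq.domain}, for $p,q\geq 1$ we have $\zeta(p+1)=\int_0^1 a\,b^{p}$ and $\zeta(q+1)=\int_0^1 a\,b^{q}$, where $a=\frac{dt}{1-t}$ and $b=\frac{dt}{t}$ denote the two one-forms and juxtaposition means the iterated integral over $0<t_1<\cdots<1$. Thus I would encode each factor as a word in the alphabet $\{a,b\}$, namely $\zeta(p+1)\leftrightarrow a\,b^{p}$ and $\zeta(q+1)\leftrightarrow a\,b^{q}$, of weights $p+1$ and $q+1$ respectively.

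First I would apply the shuffle product formula to the pair $a\,b^{p}$ and $a\,b^{q}$. Every interleaving preserves the internal order of each word, so the result is a sum of words of weight $p+q+2$ containing exactly two $a$'s (one from each factor) and $p+q$ copies of $b$. Since both source words begin with $a$ and end with $b$, every shuffle word begins with $a$ and ends with $b$; hence each is admissible and, having exactly two $a$'s, represents a convergent double zeta value. A word whose two $a$'s separate it into a first block of length $\alpha_1$ and a second block of length $\beta$ corresponds to $\zeta(\alpha_1,\beta)$ with $\alpha_1+\beta=p+q+2$; writing $\beta=\alpha_2+1$ gives precisely the double zeta values $\zeta(\alpha_1,\alpha_2+1)$ with $\alpha_1+\alpha_2=p+q+1$ that appear on the right-hand side.

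The heart of the argument is to count, for each fixed target word $T=a\,b^{\alpha_1-1}\,a\,b^{\alpha_2}$, how many shuffles of $a\,b^{p}$ and $a\,b^{q}$ produce $T$. I would split according to which factor supplies the leading $a$ of $T$. If the first $a$ comes from $a\,b^{p}$, then the $\alpha_1-1$ letters $b$ before the second $a$ must all be drawn from that same factor (the $b$'s of $a\,b^{q}$ cannot precede its own leading $a$), leaving the second block to be an arbitrary interleaving of the remaining $p-(\alpha_1-1)$ copies of $b$ with the $q$ copies of $b$ from the other factor; this gives $\binom{\alpha_2}{q}$ choices, where $\alpha_2=\beta-1$ is the number of $b$'s in the second block. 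Symmetrically, if the first $a$ comes from $a\,b^{q}$, the count is $\binom{\alpha_2}{p}$. Adding the two cases, the multiplicity of $\zeta(\alpha_1,\alpha_2+1)$ is $\binom{\alpha_2}{p}+\binom{\alpha_2}{q}$, which is exactly the asserted coefficient.

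The only real obstacle is getting this multiplicity count right, and in particular checking that the admissibility and length constraints are automatically encoded by the binomial coefficients rather than needing separate bookkeeping. For instance, the first case is genuinely possible only when $\alpha_1-1\leq p$; but when $\alpha_1-1>p$ one has $\alpha_2=(p+q+1)-\alpha_1<q$, so $\binom{\alpha_2}{q}=0$ under the convention $\binom{n}{k}=0$ for $k>n$, and symmetrically for the second case. Hence no range restriction is required and the sum may run over all $\alpha_1+\alpha_2=p+q+1$. Assembling the shuffle relation $\zeta(p+1)\zeta(q+1)=\sum_{T}(\text{multiplicity of }T)\,\zeta(T)$ then yields the Euler decomposition formula; as a consistency check, taking $p=q=1$ recovers $\zeta(2)^2=2\zeta(2,2)+4\zeta(1,3)$.
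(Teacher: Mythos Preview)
Your argument is correct: encoding $\zeta(p+1)$ and $\zeta(q+1)$ as the words $ab^p$ and $ab^q$ and counting shuffles by which factor contributes the leading $a$ gives exactly the multiplicity $\binom{\alpha_2}{p}+\binom{\alpha_2}{q}$, and your treatment of the boundary cases via the vanishing of the binomial coefficients is clean.

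Note, however, that the paper does not actually supply its own proof of Proposition~\ref{T2.5}; it simply records the formula with a citation to \cite{EW13}. What the paper does instead is illustrate its preferred technique on the alternating analogue $\zeta(\overline{p+1})\zeta(\overline{q+1})$ immediately afterwards. That illustration is methodologically different from yours: rather than working with words and counting interleavings combinatorially, the paper writes the product as a double integral using Proposition~\ref{P2.1} (respectively Eq.~\eqref{eq.aeuler}), splits $[0,1]^2$ into the two simplices $0<t<u<1$ and $0<u<t<1$, expands $\log\frac{1}{t}=\log\frac{u}{t}+\log\frac{1}{u}$ by the binomial theorem, and then invokes Proposition~\ref{P2.3} to read off the double zeta values with their binomial weights. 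Your word-based count is the combinatorial shadow of exactly that analytic computation; the paper's integral approach has the advantage that it adapts uniformly to the alternating and mixed cases treated later (Eqs.~\eqref{eq.2barm} and \eqref{eq.1barm}), while your presentation is more self-contained and makes the origin of the binomial coefficients completely transparent.
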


Here we illustrate our improved shuffle product by the shuffle product of 
two alternating Euler sums. The resulted formula can be found in \cite{Teo2018}.
\begin{proposition}
For any nonnegative integers $p$ and $q$, we have
\begin{equation}\label{eq.2barm}
\zeta(\ov{p+1})\zeta(\ov{q+1})
=\sum_{a_1+a_2=p+q}\left[\binom{a_2}{p}
+\binom{a_2}{q}\right]\zeta(a_1+1,\ov{a_2+1}).
\end{equation}
\end{proposition}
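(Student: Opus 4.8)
The plan is to run the integral proof of the Euler decomposition formula (Proposition~\ref{T2.5}) with the kernel $1/(1-t)$ replaced by $1/(1+t)$ throughout. First I would invoke the integral representation \eqref{eq.aeuler} for each factor, so that
\[
\zeta(\ov{p+1})\zeta(\ov{q+1})
=\frac{1}{p!\,q!}\int_0^1\!\!\int_0^1\left(\log\frac1s\right)^p\left(\log\frac1u\right)^q\frac{ds}{1+s}\,\frac{du}{1+u},
\]
the two sign factors $(-1)$ coming from \eqref{eq.aeuler} cancelling. This is an integral over the square $[0,1]^2$, which I would split along the diagonal into the triangles $0<s<u<1$ and $0<u<s<1$; after renaming the dummy variables in the second region and using the symmetry $p\leftrightarrow q$, the two pieces collapse to a single integral over $0<s<u<1$ of the symmetric integrand $\left(\log\frac1s\right)^p\left(\log\frac1u\right)^q+\left(\log\frac1s\right)^q\left(\log\frac1u\right)^p$ against $\frac{ds\,du}{(1+s)(1+u)}$.

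The next step is to attach an integral to each summand on the right. Carrying out the same block-integration of the interior $dt/t$ one-forms that produced Proposition~\ref{P2.2}, but now with the two opening forms $-dt/(1+t)$ dictated by \eqref{eq.bar}, I would obtain
\[
\zeta(a_1+1,\ov{a_2+1})
=\frac{1}{a_1!\,a_2!}\int_{0<s<u<1}\left(\log\frac{u}{s}\right)^{a_1}\left(\log\frac1u\right)^{a_2}\frac{ds\,du}{(1+s)(1+u)},
\]
where again the two minus signs cancel. To compare the two integrals I would pass to the adapted coordinates $A=\log\frac{u}{s}$ and $B=\log\frac1u$, write $\log\frac1s=A+B$, and expand $(A+B)^pB^q+(A+B)^qB^p$ by the binomial theorem. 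Collecting the coefficient of $A^{a_1}B^{a_2}$ with $a_1+a_2=p+q$ and integrating term by term then writes $\zeta(\ov{p+1})\zeta(\ov{q+1})$ as a combination of the $\zeta(a_1+1,\ov{a_2+1})$.

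Finally I would reconcile the numerical factors. Since $A^{a_1}B^{a_2}$ integrates to $a_1!\,a_2!\,\zeta(a_1+1,\ov{a_2+1})$, the coefficient of $\zeta(a_1+1,\ov{a_2+1})$ equals $\frac{a_1!\,a_2!}{p!\,q!}\big[\binom{p}{a_1}+\binom{q}{a_1}\big]$; using $p-a_1=a_2-q$ and $q-a_1=a_2-p$ this collapses, via $\frac{a_1!\,a_2!}{p!\,q!}\binom{p}{a_1}=\binom{a_2}{q}$ and $\frac{a_1!\,a_2!}{p!\,q!}\binom{q}{a_1}=\binom{a_2}{p}$, to exactly $\binom{a_2}{p}+\binom{a_2}{q}$, which is \eqref{eq.2barm}. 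The only points needing care are bookkeeping: tracking that each pair of $-dt/(1+t)$ factors contributes $+1$ so that no stray sign survives, and checking that the convention $\binom{a_2}{p}=0$ for $a_2<p$ correctly discards the terms $a_1>p$ (resp.\ $a_1>q$) that are absent from the two binomial expansions. I expect this simplification of the binomial coefficients to be the main, albeit routine, obstacle.
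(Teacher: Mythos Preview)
Your proposal is correct and follows essentially the same route as the paper: express the product via the double integral from \eqref{eq.aeuler}, split the square into the two simplices $0<s<u<1$ and $0<u<s<1$, expand $\log\frac1s=\log\frac{u}{s}+\log\frac1u$ binomially, and identify each term with $\zeta(a_1+1,\ov{a_2+1})$ using \eqref{eq.bar}/Proposition~\ref{P2.3}. The only cosmetic difference is that you symmetrize in $p,q$ before expanding and then simplify the binomial factors at the end, whereas the paper reindexes $a_1=a$, $a_2=b+q$ immediately on $D_1$ and invokes symmetry for $D_2$; the content is the same.
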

\begin{proof}
The integral representation of $\zeta(\ov{p+1})$ is (ref. Eq.\,(\ref{eq.aeuler}))
\begin{equation}
\zeta(\ov{p+1})=\frac{-1}{p!}\int^1_0\left(\log\frac1t\right)^p\frac{dt}{1+t}.
\end{equation}
We express the product $\zeta(\ov{p+1}) \zeta(\ov{q+1})$ as a double integral
\begin{equation} \label{e2.1}
  \zeta(\ov{p+1}) \zeta(\ov{q+1})
  = \frac{1}{p!q!} \int_{0}^{1} \!\! \int_{0}^{1}
    \left( \log \frac{1}{t} \right)^{p}
    \left( \log \frac{1}{u} \right)^{q} \frac{\mathrm{d}t}{1+t} \frac{\mathrm{d}u}{1+u}.
\end{equation}
Decompose the square $[0,1] \times [0,1]$ into two simplices
\[
  D_{1}: 0 < t < u < 1 \quad \textrm{and} \quad D_{2}: 0 < u < t < 1.
\]
On the simplex $D_{1}$, we rewrite
\begin{align*}
  \frac{1}{p!} \left( \log \frac{1}{t}
    \right)^{p}
  &= \frac{1}{p!} \left( \log \frac{u}{t}
    + \log \frac{1}{u} \right)^{p} \\
  &= \sum_{a+b=p} \frac{1}{a!b!} \left( \log \frac{u}{t}
    \right)^{a} \left( \log \frac{1}{u} \right)^{b}.
\end{align*}
So the integral \eqref{e2.1} over $D_{1}$ is
\[
  \sum_{a+b=p} \frac{1}{a!b!q!}
    \int_{0 < t < u < 1} \left( \log \frac{u}{t} \right)^{a}
    \left( \log \frac{1}{u} \right)^{b+q} \frac{\mathrm{d}t}{1+t} \frac{\mathrm{d}u}{1+u}.
\]

Based on Eq.\,(\ref{eq.bar}) and Proposition~\ref{P2.3}, 
the integral above can be transformed into multiple zeta values as
\[
  \sum_{a+b=p+1} \binom{b+q}{q} \zeta(a+1,\ov{b+q+1}),
\]
or, by setting $a_{1} = a$ and $a_{2} = b+q$,
\[
  \sum_{a_{1}+a_{2}=p+q+1} \binom{a_{2}}{q}
    \zeta(a_{1}+1,\ov{a_{2}+1}).
\]

On the other hand, the integral \eqref{e2.1} over $D_{2}$ can be obtained from above simply exchange $p$ and $q$. The value is
\[
  \sum_{a_{1}+a_{2}=p+q+1} \binom{a_{2}}{p}
    \zeta(a_{1}+1,\ov{a_{2}+1}),
\]
which leads to our assertion.
\end{proof}

An quick similar shuffle product of a Riemann zeta value and an alternating zeta value 
gives us that $\zeta(p+2)\zeta(\ov{q+1})$ can be represented by
\begin{equation}\label{eq.1barm}
\sum_{a_1+a_2=p+q+1}\binom{a_2}{q}\zeta(\ov{a_1+1},\ov{a_2+1})
+\sum_{a_1+a_2=p+q}\binom{a_2+1}{p+1}\zeta(\ov{a_1+1},a_2+2),
\end{equation}
where $p$ and $q$ are nonnegative integers.
The classical well-known double-stuffle relations are list as below \cite{Teo2018}.
\begin{align}
\zeta(r)\zeta(s)&=\zeta(r,s)+\zeta(s,r)+\zeta(r+s),\\ \label{eq.2.11}
\zeta(r)\zeta(\ov{s}) &=\zeta(r,\ov{s})+\zeta(\ov{s},r)+\zeta(\ov{r+s}),\\ \label{eq.2.12}
\zeta(\ov{r})\zeta(\ov{s}) &= \zeta(\ov{r},\ov{s})+\zeta(\ov{s},\ov{r})+\zeta(r+s).
\end{align}
As an immediate consequence of Euler decomposition formula, we reprove a weighted sum formula due to Ohno and Zudilin in 2008.

\begin{proposition} \textup{\cite{OZ08}} \label{prop.27}
For any nonnegative integer $r$, we have
\[
  \sum_{\alpha_{1}+\alpha_{2}=r+3} 2^{\alpha_{2}}
    \zeta(\alpha_{1},\alpha_{2}+1)
  = \frac{r+5}{2} \zeta(r+4).
\]
\end{proposition}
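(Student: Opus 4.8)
The plan is to combine the Euler decomposition formula (shuffle side) with the harmonic (stuffle) product relation, arranging matters so that an unwanted double zeta value cancels. Throughout set $w=r+4$, the common weight of the terms $\zeta(\alpha_{1},\alpha_{2}+1)$ with $\alpha_{1}+\alpha_{2}=r+3$, and write $W$ for the sum to be evaluated. First I would sum the Euler decomposition (Proposition~\ref{T2.5}) over all pairs of positive integers $p,q$ with $p+q=r+2$, so that each product $\zeta(p+1)\zeta(q+1)$ is a convergent $\zeta(a)\zeta(b)$ with $a,b\ge 2$ and $a+b=w$. Interchanging the two summations, the coefficient of a fixed $\zeta(\alpha_{1},\alpha_{2}+1)$ becomes $\sum_{p=1}^{r+1}\bigl[\binom{\alpha_{2}}{p}+\binom{\alpha_{2}}{r+2-p}\bigr]$. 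Since $1\le\alpha_{2}\le r+2$, the full binomial sum $\sum_{p=0}^{r+2}\binom{\alpha_{2}}{p}$ equals $2^{\alpha_{2}}$, so restricting to $1\le p\le r+1$ merely deletes the end terms $\binom{\alpha_{2}}{0}=1$ and $\binom{\alpha_{2}}{r+2}=[\alpha_{2}=r+2]$; each of the two binomial sums therefore equals $2^{\alpha_{2}}-1-[\alpha_{2}=r+2]$. This yields
\[
  \sum_{\substack{a+b=w\\ a,b\ge 2}}\zeta(a)\zeta(b)
  =2W-2\sum_{\alpha_{1}+\alpha_{2}=r+3}\zeta(\alpha_{1},\alpha_{2}+1)-2\zeta(1,w-1),
\]
the last term coming from the single index $\alpha_{2}=r+2$ (that is, $\alpha_{1}=1$). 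Applying Euler's classical depth-two sum formula $\sum_{\alpha_{1}+\alpha_{2}=r+3}\zeta(\alpha_{1},\alpha_{2}+1)=\zeta(w)$ rewrites the right-hand side as $2W-2\zeta(w)-2\zeta(1,w-1)$.

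Next I would evaluate the same product-sum by the harmonic product $\zeta(a)\zeta(b)=\zeta(a,b)+\zeta(b,a)+\zeta(a+b)$. Summing over $a+b=w$ with $a,b\ge 2$ gives $2S+(w-3)\zeta(w)$, where $S=\sum_{a=2}^{w-2}\zeta(a,w-a)$ and the two double-sum contributions coincide after the reflection $a\mapsto w-a$. The sum formula again yields $S=\zeta(w)-\zeta(1,w-1)$, so this second evaluation equals $(w-1)\zeta(w)-2\zeta(1,w-1)$. Equating the two expressions for $\sum\zeta(a)\zeta(b)$, the terms $-2\zeta(1,w-1)$ cancel, leaving $2W=(w+1)\zeta(w)$, i.e. $W=\tfrac{w+1}{2}\zeta(w)=\tfrac{r+5}{2}\zeta(r+4)$, as claimed.

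The main obstacle is the bookkeeping at the endpoints. The clean identity $\sum_{p}\binom{\alpha_{2}}{p}=2^{\alpha_{2}}$ wants to run over all $p$, but the Euler decomposition is valid (and the products converge) only for $p,q\ge 1$; discarding $p=0$ and $p=r+2$ forces the correction terms, and in particular spawns the stray double zeta value $\zeta(1,w-1)$. The point that makes the argument succeed is that precisely the same $\zeta(1,w-1)$ is produced on the stuffle side, so the two unknown double zetas cancel and no separate evaluation of $\zeta(1,w-1)$ is ever required. I would also be careful to invoke the depth-two sum formula as Euler's classical identity, so that no circular dependence on the later Theorem~\ref{T1.2} arises.
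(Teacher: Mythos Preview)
Your proof is correct and matches the paper's approach: both sum the Euler decomposition over $p=1,\ldots,r+1$, reduce the resulting binomial coefficient sums to $2^{\alpha_2}-1-[\alpha_2=r+2]$, and then finish with the depth-two sum formula. The only cosmetic difference is that the paper invokes Euler's classical evaluation $\zeta(1,r+3)=\tfrac{r+3}{2}\zeta(r+4)-\tfrac12\sum_{\ell}\zeta(\ell+2)\zeta(r-\ell+2)$ as a black box, whereas you obtain the equivalent relation by applying the stuffle product to $\sum_{a,b\ge 2}\zeta(a)\zeta(b)$ and letting the two copies of $\zeta(1,w-1)$ cancel---this is the same identity, just rearranged.
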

\begin{proof}
Taking $p = \ell$ and $q = r-\ell$ with $0 \leq \ell \leq r$ in Theorem~\ref{T2.5}, Euler decomposition formula reads as
\[
  \sum_{\alpha_{1}+\alpha_{2}=r+3}
    \left[ \binom{\alpha_{2}}{\ell+1} + \binom{\alpha_{2}}{r-\ell+1} \right] \zeta(\alpha_{1},\alpha_{2}+1)
  = \zeta(\ell+2) \zeta(r-\ell+2).
\]
Summing over all $0 \leq \ell \leq r$ and noting
\[
  \sum_{\ell=0}^{r} \binom{\alpha_{2}}{\ell+1}
  = \sum_{\ell=0}^{r} \binom{\alpha_{2}}{r-\ell+1},
\]
we obtain that
\begin{equation} \label{e2.2}
  \sum_{\alpha_{1}+\alpha_{2}=r+3} \sum_{\ell=0}^{r} \binom{\alpha_{2}}{\ell+1}
    \zeta(\alpha_{1},\alpha_{2}+1)
  = \frac{1}{2} \sum_{\ell=0}^{r} \zeta(\ell+2) \zeta(r-\ell+2).
\end{equation}
According to the range of $\alpha_{2}$, we easily get the inner sum
\[
  \sum_{\ell=0}^{r} \binom{\alpha_{2}}{\ell+1} = \begin{cases}
  2^{\alpha_{2}}-1 &\textrm{if $1 \leq \alpha_{2} \leq r+1$}, \\
  2^{\alpha_{2}}-2 &\textrm{if $\alpha_{2} = r+2$}.
  \end{cases}
\]
Therefore, the left-hand side on the relation \eqref{e2.2} is equal to
\[
  \sum_{\alpha_{1}+\alpha_{2}=r+3} (2^{\alpha_{2}}-1)
    \zeta(\alpha_{1},\alpha_{2}+1) - \zeta(1,r+3).
\]
Our assertion then follows from an evaluation of $\zeta(1,r+3)$, due to Euler \cite{E1775}, as shown
\[
  \zeta(1,r+3)
  = \frac{r+3}{2} \zeta(r+4)
    - \frac{1}{2} \sum_{\ell=0}^{r} \zeta(\ell+2) \zeta(r-\ell+2)
\]
and the classical sum formula
\[
  \sum_{\alpha_{1}+\alpha_{2}=r+3} \zeta(\alpha_{1},\alpha_{2}+1)
  = \zeta(r+4).
\]
\end{proof}

Similarly, if we use the same method on Eq.\,(\ref{eq.2barm}), then
we get 
\begin{equation}\label{eq.2w}
\sum_{a+b=r}\zeta(\ov{a+1})\zeta(\ov{b+1})
=\sum_{a+b=r}2^{b+1}\zeta(a+1,\ov{b+1}).
\end{equation}

In the final section, we shall develop a more general weighted sum formula of Euler double sums.

We derived an integral representation of $\zeta^\star(\{1\}^q,n+2)$ in 
\cite[Corollary 2.3]{CE2018}. Here we present a classical method based on 
the Bose-Einstein integral form of $\Li_q(u)$.
\begin{proposition}
Let $q$ and $n$ be nonnegative integers. Then
\begin{equation}
\zeta^\star(\{1\}^q,n+2)
=\frac1{q!n!}\int_{E_2}\left(\log\frac1{1-t_2}\right)^q\left(\log\frac{t_2}{t_1}\right)^n
\frac{dt_1dt_2}{(1-t_1)t_2}.
\end{equation}
\end{proposition}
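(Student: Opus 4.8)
The plan is to evaluate the double integral one variable at a time, reducing the inner integral over $t_1$ to a single polylogarithm and then the remaining integral over $t_2$ to the defining series of $\zeta^\star(\{1\}^q,n+2)$. Write $I$ for the right-hand side. Fixing $t_2$, I would substitute $t_1=t_2e^{-u}$ in the inner integral, so that $\log(t_2/t_1)=u$ and $\frac{\mathrm dt_1}{1-t_1}=\frac{t_2e^{-u}}{1-t_2e^{-u}}\,\mathrm du$; this transforms $\frac{1}{n!}\int_0^{t_2}\bigl(\log\frac{t_2}{t_1}\bigr)^n\frac{\mathrm dt_1}{1-t_1}$ into exactly the Bose--Einstein integral form $\frac{1}{n!}\int_0^\infty\frac{u^n}{e^u/t_2-1}\,\mathrm du$ of $\Li_{n+1}(t_2)$. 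Hence the inner integral equals $\Li_{n+1}(t_2)$, and $I=\frac{1}{q!}\int_0^1\bigl(\log\frac{1}{1-t_2}\bigr)^q\,\Li_{n+1}(t_2)\,\frac{\mathrm dt_2}{t_2}$.

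Next I would expand $\Li_{n+1}(t_2)=\sum_{N\ge1}t_2^N/N^{n+1}$ and interchange summation and integration; since every factor in the integrand is nonnegative on $(0,1)$, this is justified by Tonelli's theorem, and I obtain $I=\sum_{N\ge1}N^{-(n+1)}A_N^{(q)}$, where $A_N^{(q)}=\frac{1}{q!}\int_0^1\bigl(\log\frac{1}{1-t}\bigr)^q t^{N-1}\,\mathrm dt$. Everything then reduces to evaluating $A_N^{(q)}$.

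The key step is to recognize $A_N^{(q)}$ as a derivative of the Beta function. Since $\int_0^1 t^{N-1}(1-t)^s\,\mathrm dt=B(N,s+1)=\frac{(N-1)!}{(s+1)(s+2)\cdots(s+N)}$, differentiating $q$ times in $s$ under the integral sign and setting $s=0$ gives $A_N^{(q)}=\frac{(-1)^q}{q!}\,\phi^{(q)}(0)$ with $\phi(s)=\frac{1}{N}\prod_{m=1}^N(1+s/m)^{-1}$. Reading off the coefficient of $s^q$ from the generating identity $\prod_{m=1}^N(1+s/m)^{-1}=\sum_{q\ge0}(-1)^q h_q\bigl(1,\tfrac12,\dots,\tfrac1N\bigr)s^q$ for the complete homogeneous symmetric polynomials $h_q\bigl(1,\tfrac12,\dots,\tfrac1N\bigr)=\sum_{1\le k_1\le\cdots\le k_q\le N}(k_1\cdots k_q)^{-1}$, I find $A_N^{(q)}=\frac{1}{N}\sum_{1\le k_1\le\cdots\le k_q\le N}(k_1\cdots k_q)^{-1}$. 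Substituting this back yields $I=\sum_{N\ge1}N^{-(n+2)}\sum_{1\le k_1\le\cdots\le k_q\le N}(k_1\cdots k_q)^{-1}$, which is precisely the defining series of $\zeta^\star(\{1\}^q,n+2)$ once the star sum is grouped by its largest index $k_{q+1}=N$. (As a cross-check, the substitution $t_2=1-e^{-x}$ in the reduced integral recovers the Arakawa--Kaneko integral $\xi_{n+1}(q+1)$, giving an alternative endgame through the identity $\xi_k(s)=\zeta^\star(\{1\}^{s-1},k+1)$ quoted above.)

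I expect the main obstacle to be the evaluation of $A_N^{(q)}$: one must correctly differentiate the Beta integral under the integral sign (legitimate because $\bigl(\log\frac{1}{1-t}\bigr)^q t^{N-1}$ is integrable on $(0,1)$) and then identify the resulting Taylor coefficient with the complete homogeneous symmetric polynomial, that is, with the inner multiple harmonic sum of the star value. The remaining analytic points, namely the two interchanges of sum and integral, are routine once one invokes positivity of all integrands through Tonelli's theorem.
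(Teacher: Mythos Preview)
Your argument is correct. Both you and the paper reduce the double integral to the single integral
\[
\frac{1}{q!}\int_0^1\Bigl(\log\frac{1}{1-t_2}\Bigr)^q\,\Li_{n+1}(t_2)\,\frac{\mathrm dt_2}{t_2}
\]
via the Bose--Einstein form of $\Li_{n+1}$; that step is essentially identical. The divergence is in how each side connects this single integral to $\zeta^\star(\{1\}^q,n+2)$. The paper invokes the Arakawa--Kaneko identity $\xi_{n+1}(q+1)=\zeta^\star(\{1\}^q,n+2)$ quoted in Section~\ref{sec1}: after the substitution $t_2=1-e^{-t}$ the integral is literally $\xi_{n+1}(q+1)$, and the proof ends. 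You instead expand $\Li_{n+1}$ termwise and evaluate $A_N^{(q)}$ by differentiating the Beta integral, identifying the Taylor coefficient with the complete homogeneous symmetric polynomial $h_q(1,\tfrac12,\dots,\tfrac1N)$, which is exactly the inner harmonic star sum. Your route is longer but fully self-contained, since it does not rely on the identification $\xi_k(s)=\zeta^\star(\{1\}^{s-1},k+1)$ from \cite{AK1999}; the paper's route is shorter precisely because that identification has already been recorded. You in fact note this alternative endgame yourself as a cross-check, so you have essentially found both proofs.
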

\begin{proof}
Since from the form of Bose-Einstein integral
$$
\Li_q(u)=\int^\infty_0\frac{t^{q-1}}{e^t/u-1}\,dt
=\frac1{\Gamma(q)}\int^u_0\left(\log\frac{u}{y}\right)^{q-1}\frac{dy}{1-y},
$$
we have
\begin{align*}
\zeta^\star(\{1\}^q,n+2) &= \frac1{\Gamma(q+1)}\int^\infty_0\frac{t^q}{e^t-1}
\Li_{n+1}(1-e^{-t})\,dt\\
&=\frac1{q!}\int^1_0\left(\log\frac1{1-u}\right)^q\Li_{n+1}(u)\frac{du}{u} \\
&=\frac1{q!n!}\int_{0<y<u<1}\left(\log\frac1{1-u}\right)^q\left(\log\frac{u}{y}\right)^n
\frac{dydu}{(1-y)u}.
\end{align*}
\end{proof}
\vspace{.5truecm}
\section{Evaluations of $Z_{\pm}(n)$, and $Z^\star_{\pm}(n)$} \label{sec.3}
Recall that
\begin{alignat*}{3}
  Z_-(n)&= \sum_{a+b=n} (-1)^{b} \zeta(\{1\}^{a},b+2),
  &\quad\textrm{ and } \quad
  &&Z_+(n)&= \sum_{a+b=n} \zeta(\{1\}^{a},b+2),\\
  Z^\star_-(n)&=\sum_{a+b=n} (-1)^b \zeta^\star(\{1\}^a,b+2),
  &\quad\textrm{ and } \quad
  &&Z^\star_+(n)&= \sum_{a+b=n} \zeta^\star(\{1\}^a,b+2).
\end{alignat*}
All $Z_{\pm}(n)$, and $Z^\star_{\pm}(n)$ 
are sums of multiple zeta(-star) values of height one and can be expressed as double integrals
\begin{align*}
  Z_-(n)
  &= \frac{1}{n!} \int_{E_{2}}
    \left( \log \frac{1}{1-t_{1}} - \log \frac{t_{2}}{t_{1}} \right)^{n} \frac{\mathrm{d}t_{1} \mathrm{d}t_{2}}{(1-t_{1})t_{2}}, \\
  Z_+(n)
  &= \frac{1}{n!} \int_{E_{2}}
    \left( \log \frac{1}{1-t_{1}} + \log \frac{t_{2}}{t_{1}} \right)^{n} 
    \frac{\mathrm{d}t_{1} \mathrm{d}t_{2}}{(1-t_{1})t_{2}},\\
  Z^\star_-(n)
  &= \frac{1}{n!} \int_{E_{2}}
    \left( \log \frac{1}{1-t_{2}} - \log \frac{t_{2}}{t_{1}} \right)^{n} \frac{\mathrm{d}t_{1} \mathrm{d}t_{2}}{(1-t_{1})t_{2}}, \\
  Z^\star_+(n)
  &= \frac{1}{n!} \int_{E_{2}}
    \left( \log \frac{1}{1-t_{2}} + \log \frac{t_{2}}{t_{1}} \right)^{n} 
    \frac{\mathrm{d}t_{1} \mathrm{d}t_{2}}{(1-t_{1})t_{2}}.
\end{align*}
Through the duality theorem $\zeta(\{1\}^{a},b+2) = \zeta(\{1\}^{b},a+2)$, we see that 
$Z_-(2m+1) = 0$. Therefore, we combine these with Eq.\,(\ref{eq.a.1}), then we have

\begin{proposition} \label{P3.1}
For any nonnegative integer $m$, we have $Z_-(2m+1)=0$ and 
\[
  Z_-(2m)
  = \zeta^{\star}(\{2\}^{m+1})
  = 2 \left( 1 - \frac{1}{2^{2m+1}} \right) \zeta(2m+2)=-2\zeta(\ov{2m+2}).
\]
\end{proposition}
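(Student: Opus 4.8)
The plan is to establish both assertions of Proposition~\ref{P3.1} simultaneously by collapsing the two-variable generating function \eqref{eq.gn1} into a single-variable series for $Z_-(n)$. First I would dispose of the vanishing statement by duality alone: since $\zeta(\{1\}^a,b+2)=\zeta(\{1\}^b,a+2)$, swapping the roles of $a$ and $b$ rewrites $Z_-(n)$ with the weight $(-1)^a$ in place of $(-1)^b$, and because $a+b=n$ these two factors differ by $(-1)^n$. Hence $Z_-(n)=(-1)^nZ_-(n)$, which forces $Z_-(2m+1)=0$. This argument is self-contained and needs nothing beyond duality.

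For the even case I would pass to generating functions. Dividing \eqref{eq.gn1} by $xy$ gives
\[
  \sum_{a,b\geq 0}\zeta(\{1\}^a,b+2)\,x^ay^b
  =\frac{1}{xy}\Bigl(1-\frac{\Gamma(1-x)\Gamma(1-y)}{\Gamma(1-x-y)}\Bigr),
\]
and the substitution $x=z$, $y=-z$ turns the left side into $\sum_{n\geq 0}Z_-(n)z^n$ while reducing the Gamma quotient to $\Gamma(1-z)\Gamma(1+z)$, because $\Gamma(1-x-y)=\Gamma(1)=1$. Applying the reflection formula $\Gamma(1+z)\Gamma(1-z)=\tfrac{\pi z}{\sin\pi z}$ then yields the closed form
\[
  \sum_{n\geq 0}Z_-(n)z^n=\frac{1}{z^2}\Bigl(\frac{\pi z}{\sin\pi z}-1\Bigr).
\]
The right-hand side is an even function of $z$, which re-confirms $Z_-(2m+1)=0$, and its structure is exactly what I need for the remaining identifications.

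The final step is to recognize $\tfrac{\pi z}{\sin\pi z}$ as the generating function of the star values $\zeta^\star(\{2\}^n)$, that is, $\sum_{n\geq 0}\zeta^\star(\{2\}^n)z^{2n}=\tfrac{\pi z}{\sin\pi z}$. Subtracting the constant term and dividing by $z^2$ shows that the coefficient of $z^{2m}$ on the right equals $\zeta^\star(\{2\}^{m+1})$, so comparing coefficients gives $Z_-(2m)=\zeta^\star(\{2\}^{m+1})$ at once. The remaining two equalities are then purely arithmetic: the standard evaluation $\zeta^\star(\{2\}^n)=2(1-2^{1-2n})\zeta(2n)$ specializes at $n=m+1$ to $2(1-2^{-(2m+1)})\zeta(2m+2)$, and the elementary identity $\zeta(\ov{s})=(2^{1-s}-1)\zeta(s)$ rewrites this as $-2\zeta(\ov{2m+2})$.

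The main obstacle I anticipate is supplying the generating-function identity for $\zeta^\star(\{2\}^n)$ (equivalently, the closed form $2(1-2^{1-2n})\zeta(2n)$); everything else is bookkeeping with signs and the Gamma reflection formula. If I wish to avoid quoting that star-value evaluation, I can instead route the even case through \eqref{eq.a.1}: after the same parity bookkeeping used for the odd case, \eqref{eq.a.1} gives $Z_-(2m)=-2\zeta(\ov{2m+2})$ directly, and the identification with $\zeta^\star(\{2\}^{m+1})$ follows by comparing closed forms. Either route isolates the evaluation of $\zeta^\star(\{2\}^{m+1})$ as the sole nontrivial ingredient beyond \eqref{eq.gn1} and duality.
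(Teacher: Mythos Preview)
Your proof is correct. The odd-case argument via duality is exactly what the paper does (the sentence immediately preceding Proposition~\ref{P3.1}). For the even case, however, your primary route is genuinely different from the paper's: the paper simply invokes the Le--Murakami identity \eqref{eq.a.1} to read off $Z_-(2m)=-2\zeta(\ov{2m+2})$ directly, treating the equalities with $\zeta^\star(\{2\}^{m+1})$ and $2(1-2^{-(2m+1)})\zeta(2m+2)$ as known closed forms. You instead specialize the two-variable generating function \eqref{eq.gn1} to $y=-x$, collapse the Gamma quotient via the reflection formula to $\pi z/\sin\pi z$, and then identify this with the generating series $\sum_{n\ge0}\zeta^\star(\{2\}^n)z^{2n}$ (equivalently, the infinite product $\prod_{k\ge1}(1-z^2/k^2)^{-1}$). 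Your approach is more self-contained in that it does not rely on \eqref{eq.a.1} as a black box, and it explains \emph{why} the $\zeta^\star(\{2\}^{m+1})$ appears, but it trades that dependence for the need to justify the $\pi z/\sin\pi z$ generating-function identity. The alternative you sketch at the end---routing through \eqref{eq.a.1} and then matching closed forms---is precisely the paper's argument.
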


The function $Z_+(n)$ can be represented as a polynomial of $\zeta(\ov{s})$
over $\mathbb Q$.

\begin{proposition}\label{prop.b}
For any nonnegative integer $n$, we have
\begin{equation}
Z_+(n)=\sum_{a+b=n}\zeta(\{1\}^a,b+2)
=-2^{n+2}P_{n+2}(0,\zetabar{2},\ldots,\zetabar{n+2}),
\end{equation}
where 
$P_n(t_1,\ldots,t_n)$ is the modified Bell polynomials defined by 
\cite{Chen2017, CC2010}
\begin{equation}
\exp\left(\sum^\infty_{k=1}\frac{x_k}{k}z^k\right)
=\sum^\infty_{m=0}P_m(x_1,\ldots,x_m)z^m.
\end{equation}
\end{proposition}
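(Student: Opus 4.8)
The plan is to specialize the two-variable generating function \eqref{eq.gn1} to a single variable and then read off the modified Bell polynomials directly from its exponential form.

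First I would set $x=y=t$ in \eqref{eq.gn1}. Since $\zeta(\{1\}^m,n+2)$ carries the monomial $x^{m+1}y^{n+1}$, this substitution turns it into $t^{m+n+2}$, so the coefficient of $t^{p+2}$ on the left-hand side collects precisely $\sum_{a+b=p}\zeta(\{1\}^a,b+2)=Z_+(p)$. Using the exponential form of \eqref{eq.gn1} with $x^k+y^k-(x+y)^k=(2-2^k)t^k$, this yields
\begin{equation*}
\sum_{p=0}^\infty Z_+(p)\,t^{p+2}
=1-\frac{\Gamma(1-t)^2}{\Gamma(1-2t)}
=1-\exp\left\{\sum_{k=2}^\infty (2-2^k)\,\frac{\zeta(k)}{k}\,t^k\right\}.
\end{equation*}

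The key algebraic step is to rewrite the exponent through alternating zeta values. From $\zetabar{k}=(2^{1-k}-1)\zeta(k)$ one checks the identity $2^k\zetabar{k}=(2-2^k)\zeta(k)$, so the exponent becomes $\sum_{k\geq2}\frac{\zetabar{k}}{k}(2t)^k$. Matching this against the defining series of the modified Bell polynomials with $z=2t$, $x_1=0$, and $x_k=\zetabar{k}$ for $k\geq2$, I obtain
\begin{equation*}
\exp\left\{\sum_{k=2}^\infty \frac{\zetabar{k}}{k}(2t)^k\right\}
=\sum_{m=0}^\infty 2^m\,P_m(0,\zetabar{2},\ldots,\zetabar{m})\,t^m.
\end{equation*}

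Finally I would compare coefficients of $t^{n+2}$. The constant and linear terms on the right vanish because $P_0=1$ and $P_1(0)=0$, consistent with the left-hand series starting at $t^2$; for $m=n+2\geq2$ the comparison gives $Z_+(n)=-2^{n+2}P_{n+2}(0,\zetabar{2},\ldots,\zetabar{n+2})$, which is the assertion. No serious obstacle is expected: the argument is a substitution followed by a coefficient comparison, and the only points demanding care are the index shift between the weight $m+n+2$ in \eqref{eq.gn1} and the argument $n$ of $Z_+$, and the observation that the $k=1$ slot of the Bell polynomial is filled by the harmless value $x_1=0$.
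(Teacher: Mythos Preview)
Your proposal is correct and follows essentially the same approach as the paper's own proof: both specialize the generating function \eqref{eq.gn1} at $x=y$, rewrite the exponent $(2-2^k)\zeta(k)$ as $2^k\zetabar{k}$, expand via the defining series of the modified Bell polynomials with $z=2t$ and $x_1=0$, and read off the coefficient of $t^{n+2}$. Your extra remark that $P_0=1$ and $P_1(0)=0$ account for the absence of low-order terms is a helpful clarification not spelled out in the paper.
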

\begin{proof}
We substitute $x=y$ in Eq.\,(\ref{eq.gn1}), we have
\begin{align*}
\sum^\infty_{n=0}Z_+(n)x^{n+2}
&=1-\exp\left\{\sum^\infty_{k=2}\frac{(2-2^k)\zeta(k)}{k}x^k\right\}
=1-\exp\left\{\sum^\infty_{k=2}\frac{\zeta(\ov{k})}{k}(2x)^k\right\}\\
&=1-\sum^\infty_{m=0}P_m(0,\zeta(\ov{2}),\zeta(\ov{3}),\ldots,\zeta(\ov{m}))(2x)^m.
\end{align*}
Comparing the coefficient of $x^{n+2}$, we have 
$$
Z_+(n)=-2^{n+2}P_{n+2}(0,\zetabar{2},\ldots,\zetabar{n+2}),
$$
for $n\geq 0$.
\end{proof}
Here we list some values of $Z_+(n)$ as follows.
\begin{align*}
Z_+(0) &= -2\zeta(\ov{2}) \ =\ \zeta(2),\\
Z_+(1) &= -\frac83\zeta(\ov{3}) \ =\ 2\zeta(3),\\
Z_+(2) &=-2\bigl(\zeta(\ov{2})^2+2\zeta(\ov{4})\bigr),\\
Z_+(3) &= -\frac{16}{15}\bigl(5\zeta(\ov{2})\zeta(\ov{3})+6\zeta(\ov{5})\bigr),\\
Z_+(4) &=-\frac49\bigl(3\zeta(\ov{2})^3+18\zeta(\ov{2})\zeta(\ov{4})
+8\zeta(\ov{3})^2+24\zeta(\ov{6})\bigr).
\end{align*}
\begin{proposition}
The generating function of $\zeta^\star(\{1\}^m,n+2)$ is 
$$
\sum^\infty_{m=0}\sum^\infty_{n=0}\zeta^\star(\{1\}^m,n+2)x^my^n
=\frac1{(1-x)(1-y)}\FTS{1}{1}{1-y}{2-x}{2-y}{1}.
$$
\end{proposition}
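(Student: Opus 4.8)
The plan is to build the bivariate generating function directly from the integral representation of $\zeta^\star(\{1\}^m,n+2)$ established in the preceding proposition. Multiplying that representation by $x^m y^n$ and summing over all $m,n\ge 0$, I would interchange the double sum with the integral over $E_2$ and use the exponential series $\sum_{m\ge0}\frac{x^m}{m!}\bigl(\log\frac{1}{1-t_2}\bigr)^m=(1-t_2)^{-x}$ together with $\sum_{n\ge0}\frac{y^n}{n!}\bigl(\log\frac{t_2}{t_1}\bigr)^n=(t_2/t_1)^{y}$. This collapses the generating function into the single closed-form double integral
\[
  \sum_{m,n\ge0}\zeta^\star(\{1\}^m,n+2)\,x^m y^n
  =\int_{0<t_1<t_2<1}(1-t_2)^{-x}\Bigl(\frac{t_2}{t_1}\Bigr)^{y}\frac{\mathrm{d}t_1\,\mathrm{d}t_2}{(1-t_1)t_2},
\]
valid for $x,y$ in a neighborhood of the origin.

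Next I would evaluate the iterated integral one variable at a time. Writing the integrand as $(1-t_2)^{-x}t_2^{y-1}\cdot t_1^{-y}(1-t_1)^{-1}$, the inner integral over $t_1\in(0,t_2)$ is $\int_0^{t_2}\frac{t_1^{-y}}{1-t_1}\,\mathrm{d}t_1$; expanding $\frac{1}{1-t_1}=\sum_{k\ge0}t_1^{k}$ and integrating term by term gives $\sum_{k\ge0}\frac{t_2^{\,k+1-y}}{k+1-y}$. Substituting this back and pulling the sum outside, the outer $t_2$-integral reduces to the family of Beta integrals
\[
  \int_0^1(1-t_2)^{-x}t_2^{k}\,\mathrm{d}t_2=B(k+1,1-x)=\frac{k!}{(1-x)(2-x)_k}.
\]

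Collecting the two evaluations, the generating function becomes $\sum_{k\ge0}\frac{1}{k+1-y}\cdot\frac{k!}{(1-x)(2-x)_k}$. I would then rewrite the prefactor via $\frac{1}{k+1-y}=\frac{(1-y)_k}{(1-y)(2-y)_k}$, which follows from the Pochhammer identity $(1-y)_{k+1}=(1-y)(2-y)_k$. This yields
\[
  \frac{1}{(1-x)(1-y)}\sum_{k\ge0}\frac{k!\,(1-y)_k}{(2-x)_k(2-y)_k},
\]
and since $(1)_k=k!$ forces $\frac{(1)_k(1)_k}{k!}=k!$, the series is exactly $\FTS{1}{1}{1-y}{2-x}{2-y}{1}$, giving the claimed formula after factoring out $\frac{1}{(1-x)(1-y)}$.

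The main obstacle is analytic rather than algebraic: I must justify interchanging the double summation with the integration over $E_2$ and the term-by-term integration of the geometric series, and I must track the domain of validity. Absolute convergence near $t_1=0$ requires $\operatorname{Re}y<1$, the Beta integral requires $\operatorname{Re}x<1$, and one needs $k+1-y\ne0$ for every $k\ge0$; for $x,y$ in a small neighborhood of the origin all of these hold, and the resulting identity between holomorphic functions then extends by analytic continuation. Everything else is the routine Pochhammer bookkeeping sketched above.
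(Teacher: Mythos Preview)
Your proposal is correct but follows a genuinely different route from the paper's own proof. The paper does not go through the integral representation at all: it works directly from the series definition of $\zeta^\star(\{1\}^m,n+2)$, summing first in $m$ by using the elementary identity
\[
  \sum_{m\ge 0} x^m \sum_{1\le k_1\le\cdots\le k_m\le k}\frac{1}{k_1\cdots k_m}
  =\prod_{j=1}^{k}\Bigl(1-\frac{x}{j}\Bigr)^{-1},
\]
which turns the double generating function into $\displaystyle\sum_{n\ge 0}\sum_{k\ge 1}\prod_{j=1}^{k}(1-x/j)^{-1}\,y^n k^{-n-2}$. After the geometric summation in $n$, the paper simply recognizes the remaining single sum over $k$ as the stated ${}_3F_2(1)$.

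Your approach instead exploits the integral formula for $\zeta^\star(\{1\}^m,n+2)$ proved in the preceding proposition, collapses the generating function to a closed-form double integral, and evaluates it by a geometric-series expansion followed by a Beta integral. Both routes land on the very same intermediate series $\sum_{k\ge 0}\dfrac{k!}{(1-x)(2-x)_k\,(k+1-y)}$, so the final Pochhammer manipulation you describe is identical in spirit to what the paper leaves implicit. The paper's argument is shorter and purely algebraic, needing no analytic justification of sum--integral interchanges; your argument is slightly longer but dovetails naturally with the integral machinery used throughout the paper and makes the domain of validity $(\operatorname{Re}x<1,\ \operatorname{Re}y<1)$ explicit.
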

\begin{proof}
First, we write
$$
\sum^\infty_{m=0}\sum^\infty_{n=0}\zeta^\star(\{1\}^m,n+2)x^my^n
=\sum^\infty_{n=0}\sum^\infty_{k=1}
\prod^k_{j=1}\left(1-\frac{x}{j}\right)^{-1}
\frac{y^n}{k^{n+2}}.
$$
After summing on $n$, what remains is an instance of 
the hypergeometric series ${}_3F_2(1)$:
$$
\frac1{(1-x)(1-y)}\FTS{1}{1}{1-y}{2-x}{2-y}{1}.
$$
\end{proof}
If we set $y=0$ and compare the coefficients of $x^m$, we have \cite{G96, OZ08}
$$
\zeta^\star(\{1\}^m,2)=(m+1)\zeta(m+2).
$$

The following equation can be easily obtained by substituting $y=-x$ in
the generating function.
\begin{proposition}
For any nonnegative integer $n$, we have
\begin{equation}
Z^\star_-(n)=\sum_{a+b=n}(-1)^b\zeta^\star(\{1\}^a,b+2)
=\sum_{a+2b=n}\zeta^\star(a+2,\{2\}^b).
\end{equation}
\end{proposition}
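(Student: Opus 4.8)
The plan is to push everything through the two-variable generating function for $\zeta^\star(\{1\}^m,n+2)$ and specialize $y=-x$. Substituting $y=-x$ on the left and collecting by total degree gives
\[
  \sum_{m,n\geq0}\zeta^\star(\{1\}^m,n+2)x^m(-x)^n
  =\sum_{N\geq0}x^N\sum_{m+n=N}(-1)^n\zeta^\star(\{1\}^m,n+2)
  =\sum_{N\geq0}Z^\star_-(N)x^N,
\]
so the coefficient of $x^N$ in the specialized generating function is exactly $Z^\star_-(N)$. It then suffices to identify that coefficient with $\sum_{a+2b=N}\zeta^\star(a+2,\{2\}^b)$.

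For the right-hand side I would use the series form appearing in the proof of the generating-function proposition, namely
\[
  \sum_{m,n\geq0}\zeta^\star(\{1\}^m,n+2)x^my^n
  =\sum_{k\geq1}\frac{1}{k(k-y)}\prod_{j=1}^{k}\frac{1}{1-x/j},
\]
obtained by summing the geometric series in $n$ before passing to ${}_3F_2$. Setting $y=-x$ and absorbing the factor $1/(k+x)$ into the $j=k$ term of the product yields the single sum
\[
  \sum_{N\geq0}Z^\star_-(N)x^N
  =\sum_{k\geq1}\frac{1}{k(k+x)}\prod_{j=1}^{k}\frac{1}{1-x/j}
  =\sum_{k\geq1}\frac{1}{k^2-x^2}\prod_{j=1}^{k-1}\frac{1}{1-x/j}.
\]
Independently I would compute the generating function of the proposed answer: writing $\zeta^\star(a+2,\{2\}^b)=\sum_{1\leq\kappa\leq k_2\leq\cdots\leq k_{b+1}}\kappa^{-(a+2)}k_2^{-2}\cdots k_{b+1}^{-2}$, summing $\sum_a(x/\kappa)^a$ over the smallest index and summing the trailing block of $2$'s over all non-decreasing tuples with entries $\geq\kappa$ gives
\[
  \sum_{a,b\geq0}\zeta^\star(a+2,\{2\}^b)x^{a+2b}
  =\sum_{\kappa\geq1}\frac{1}{\kappa(\kappa-x)}\prod_{l\geq\kappa}\frac{1}{1-x^2/l^2}.
\]
The proposition is thus equivalent to the identity
\[
  \sum_{k\geq1}\frac{1}{k^2-x^2}\prod_{j=1}^{k-1}\frac{1}{1-x/j}
  =\sum_{\kappa\geq1}\frac{1}{\kappa(\kappa-x)}\prod_{l\geq\kappa}\frac{1}{1-x^2/l^2}
\]
of formal power series in $x$.

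This last identity is where the real work lies, and it is the step I expect to be the main obstacle. The left sum carries the large exponent on the \emph{largest} summation index (so its product runs over small indices $j<k$), whereas the right sum carries it on the \emph{smallest} index (so its product runs over large indices $l\geq\kappa$); moreover a naive term-by-term expansion of either side merely reproduces its own definition rather than the opposite form, so the equivalence is genuinely nontrivial. To prove it I would first recast the left sum in the closed hypergeometric form $\tfrac{1}{1-x^2}\FTS{1}{1}{1+x}{2-x}{2+x}{1}$, whose summand has rational term-ratio $\frac{(k+1)(k+1+x)}{(k+2-x)(k+2+x)}$, and attack it by creative (Gosper-type) telescoping to obtain a closed expression for the partial sums. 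An equivalent route is to reduce both sides to a common Beta-integral form $\sum_k B(k,1-x)/(k\pm x)$ and compare; closest to the spirit of this paper, one can instead verify the equality coefficient-by-coefficient in $x$, matching $Z^\star_-(N)=\sum_{a+2b=N}\zeta^\star(a+2,\{2\}^b)$ via the stuffle relations, with the cases $N=0,1,2$ (all giving $\zeta(N+2)$ times the expected rational, e.g. $\tfrac{11}{4}\zeta(4)$ at $N=2$) serving as a consistency check.
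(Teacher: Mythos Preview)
Your setup via the substitution $y=-x$ is exactly the route the paper signals in the sentence preceding the proposition, and your two series expressions for the left and right generating functions are both correct. But you have correctly diagnosed your own gap: the proof is not finished, because you do not actually establish the power-series identity
\[
  \sum_{k\geq1}\frac{1}{k^2-x^2}\prod_{j=1}^{k-1}\frac{1}{1-x/j}
  \;=\;
  \sum_{\kappa\geq1}\frac{1}{\kappa(\kappa-x)}\prod_{l\geq\kappa}\frac{1}{1-x^2/l^2}.
\]
The three attack routes you list (Gosper telescoping on the ${}_3F_2$, a Beta-integral comparison, coefficient matching by stuffle) are all plausible, but none is carried out; in particular the right-hand side is a genuine ${}_4F_3(1)$ after rewriting the infinite product as $\Gamma(\kappa-x)\Gamma(\kappa+x)/\Gamma(\kappa)^2$, so the required hypergeometric transformation is not one of the standard two-line identities, and ``coefficient matching by stuffle'' is essentially restating the proposition.

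The paper avoids this identity altogether. Its complete argument uses the double-integral representation
\[
  Z^\star_-(n)=\frac{1}{n!}\int_{E_2}\Bigl(\log\tfrac{1}{1-t_2}-\log\tfrac{t_2}{t_1}\Bigr)^{n}\frac{dt_1\,dt_2}{(1-t_1)t_2},
\]
rewrites the bracket as $\log\tfrac{1-t_1}{1-t_2}+\bigl(\log\tfrac{1}{1-t_1}-\log\tfrac{t_2}{t_1}\bigr)$, expands binomially, and then invokes the closed evaluation from \cite[Eq.\,(11)]{CE2018},
\[
  \frac{1}{a!b!}\int_{E_2}\Bigl(\log\tfrac{1-t_1}{1-t_2}\Bigr)^{a}\Bigl(\log\tfrac{1}{1-t_1}-\log\tfrac{t_2}{t_1}\Bigr)^{b}\frac{dt_1\,dt_2}{(1-t_1)t_2}
  =\begin{cases}\zeta^\star(a+2,\{2\}^m),& b=2m,\\ 0,& b\text{ odd},\end{cases}
\]
so that summing over $a+b=n$ gives $\sum_{a+2m=n}\zeta^\star(a+2,\{2\}^m)$ directly. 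This bypasses any hypergeometric manipulation; the ``work'' is outsourced to the cited integral identity. If you want to complete your generating-function route, the cleanest option is probably to recognise both sides in Gamma form and apply a Thomae-type ${}_3F_2$ transformation, but as written your proposal stops at the statement of the key lemma rather than its proof.
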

We can also use a result in \cite[Eq.\,(11)]{CE2018}:
$$
\frac{1}{a!b!}\int_{E_2}\left(\log\frac{1-t_1}{1-t_2}\right)^a
\left(\log\frac1{1-t_1}-\log\frac{t_2}{t_1}\right)^b\frac{dt_1dt_2}{(1-t_1)t_2}
=\left\{\begin{array}{ll}\zeta^\star(a+2,\{2\}^m),&\mbox{ if }b=2m,\\
0,&\mbox{ if }b=2m+1.\end{array}\right.
$$
Summing over all possible nonnegative integers $a$ and $b$ 
with $a+b=n$, and we get the same
integral representation of $Z^\star_-(n)$.

We have list the values of $Z^\star_+(n)$ in Eq.\,(\ref{eq.d.1}), 
i.e. for any nonnegative integer $n$,
$$
Z^\star_+(n)=2(n+1)(1-2^{-n-1})\zeta(n+2)=-2(n+1)\zeta(\ov{n+2}).
$$
In the next section,
we will use the convolution of $Z_-(n)$ and $Z_+(m)$ 
to construct a weighted sum formula.
\vspace{.5truecm}
\section{Convolution of $Z_-(n)$ and $Z_+(m)$} \label{S4}
In this section, we describe our general procedure to the evaluation of the weighted sum
\[
  \sum_{\substack{0 \leq m \leq p \\ m: \textup{ even}}}
    \sum_{\mid\boldsymbol{\alpha}\mid=p+3} 2^{\alpha_{m+1}}
    \zeta(\alpha_{0}, \alpha_{1}, \ldots, \alpha_{m}, \alpha_{m+1}+1).
\]
\begin{theorem}
For any nonnegative integer $p$, we have
\begin{align}\nonumber
&  \sum_{\substack{0 \leq m \leq p \\ m: \textup{ even}}}
    \sum_{\mid\boldsymbol{\alpha}\mid=p+3} 2^{\alpha_{m+1}+1}
    \zeta(\alpha_{0}, \alpha_{1}, \ldots, \alpha_{m}, \alpha_{m+1}+1) \\
  &\quad= \sum_{m=0}^{\lfloor p/2\rfloor} Z_-(2m) Z_+(p-2m)
    + Z_+(p+2)
    +\left\{\begin{array}{ll}
    (p+2)\zeta(p+4)-Z_-(p+2), &\mbox{ if $p$ is even,}\\
    (p-1)\zeta(p+4), &\mbox{ if $p$ is odd.}
    \end{array}\right.
\end{align}
\end{theorem}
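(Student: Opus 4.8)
The plan is to realize both sides as integrals of total weight $p+4$ and to compare them. On the right, I first use that $Z_-$ vanishes at odd arguments (Proposition~\ref{P3.1}), so the convolution equals the full Cauchy product $\sum_{j+k=p} Z_-(j)Z_+(k)$. Inserting the double-integral representations of $Z_-$ and $Z_+$ from Section~\ref{sec.3} and applying the binomial theorem $\sum_{j+k=p}\frac{L_-^{\,j}L_+^{\,k}}{j!\,k!}=\frac{(L_-+L_+)^p}{p!}$ collapses the convolution into a single integral over a product of two triangles,
\[
  \sum_{j+k=p} Z_-(j)Z_+(k)
  = \frac{1}{p!}\int_{E_2\times E_2}(L_-+L_+)^p\,
    \frac{\mathrm{d}t_1\mathrm{d}t_2}{(1-t_1)t_2}\frac{\mathrm{d}u_1\mathrm{d}u_2}{(1-u_1)u_2},
\]
where $L_-=\log\frac{1}{1-t_1}-\log\frac{t_2}{t_1}$ and $L_+=\log\frac{1}{1-u_1}+\log\frac{u_2}{u_1}$. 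This is the two-pair analogue of the quadruple integral used for $\zeta(\ov{p+1})\zeta(\ov{q+1})$ in Section~\ref{S2}.

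For the left-hand side I would produce a matching integral. For fixed even $m$, write $\alpha_i=a_i+1$ for $0\le i\le m$ and the last argument as $a_{m+1}+2$; Proposition~\ref{P2.3} then represents $\zeta(\alpha_0,\ldots,\alpha_m,\alpha_{m+1}+1)$ as an integral over $E_{m+3}$. The weight $2^{\alpha_{m+1}+1}$ contributes an overall factor $2$ and doubles the final logarithm, so that after summing the multinomial over $\mid\bm\alpha\mid=p+3$ the logarithms telescope to $\log\frac{t_{m+3}^2}{t_1 t_{m+2}}$. Integrating out the $m$ interior variables (which contributes $\frac{1}{m!}(\log\frac{1-x_1}{1-x_2})^m$) leaves a triple integral over $0<x_1<x_2<x_3<1$, and summing over even $m$ by the even-part extraction $\frac12[(A+B)^{p+1}+(B-A)^{p+1}]$, with $A=\log\frac{1-x_1}{1-x_2}$ and $B=\log\frac{x_3^2}{x_1x_2}$, assembles the bulk of the left-hand side. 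I would track that this even-part sum runs up to $m=p+1$, whose boundary term equals $2\zeta(\{1\}^{p+2},2)=2\zeta(p+4)$ and is even exactly when $p$ is odd.

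To reconcile the two, I would shuffle the product domain $E_2\times E_2$ into the six simplices given by the interlacings of $(t_1,t_2)$ and $(u_1,u_2)$, rewrite $L_-+L_+$ and the measure in chain coordinates on each piece, and then integrate out one variable so as to pass from distribution $(\text{dimension }4,\deg p)$ to $(\text{dimension }3,\deg p+1)$, matching the triple integral above; the boundary values generated in this last integration are precisely the source of the correction terms. Collecting everything, the symmetric pairs of simplices produce the doubling $2^{\alpha_{m+1}}$ and the sign in $L_-$ restricts the surviving terms to even $m$, while the leftover pieces evaluate, through Propositions~\ref{P2.1}--\ref{P2.2} and the closed forms of $Z_\pm(p+2)$ from Section~\ref{sec.3}, to $Z_+(p+2)$ together with the $\zeta(p+4)$ and $Z_-(p+2)$ contributions.

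The main obstacle is exactly this reconciliation. The integrands are not manifestly nonnegative combinations of consecutive-ratio logarithms—the term $-\log\frac{t_2}{t_1}$ in $L_-$ carries a minus sign—so on each simplex the logarithms must be reorganized before Propositions~\ref{P2.3} and~\ref{P2.4} can be applied, and the signs are what ultimately kill the odd-$m$ contributions. Separating the boundary pieces cleanly and handling the parity split is the delicate part: the even/odd-$p$ dichotomy comes from the extreme term $m=p+1$ being even precisely when $p$ is odd, together with the vanishing of $Z_-(p+2)$ for odd $p$. A check at $p=0$ and $p=1$ (both sides equal $5\zeta(4)$ and $6\zeta(5)$ respectively) is a useful safeguard against sign and bookkeeping errors.
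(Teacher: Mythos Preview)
Your overall strategy---express the convolution as a single integral over $E_2\times E_2$ and shuffle into the six simplices---is exactly the paper's. The simplification you are missing is in the choice of integrand. Rather than $L_-+L_+$, the paper uses
\[
  \log\frac{1-t_1}{1-u_1}+\log\frac{t_2}{t_1}+\log\frac{u_2}{u_1}\;=\;-L_-+L_+,
\]
which has the same integral (since $Z_-$ vanishes on odd arguments) but the crucial feature that every summand is a ratio of two of the four variables. On each shuffle simplex this rewrites as a sum of \emph{consecutive} chain logarithms with nonnegative coefficients; e.g.\ on $D_3:0<t_1<u_1<t_2<u_2<1$ it becomes
\[
  \log\frac{1-t_1}{1-u_1}+\log\frac{u_1}{t_1}+2\log\frac{t_2}{u_1}+\log\frac{u_2}{t_2},
\]
so Propositions~\ref{P2.3}--\ref{P2.4} apply directly in dimension four and each $I(p;j)$ is already a sum of multiple zeta values---no further integration is needed. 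The coefficient $2$ on the middle logarithm is precisely the source of $2^{\alpha_{m+1}}$, and the exponent on $\log\frac{1-t_1}{1-u_1}$ is the depth index $m$. Thus the weighted sum on the left emerges directly from $I(p;3)+\cdots+I(p;6)$, while $I(p;1)+I(p;2)$ supplies the terms $\zeta(p+4)-\zeta(\{1\}^{m+1},p-m+3)$ summed over even $m$, which reorganize into $Z_+(p+2)$, $Z_-(p+2)$ and the $\zeta(p+4)$ multiples.

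Your absolute terms $\log\frac{1}{1-t_1}$ and $\log\frac{1}{1-u_1}$, by contrast, do not telescope along the shuffled chains; they would force you to carry leading $\{1\}^k$ blocks that must later cancel, and this is exactly the ``delicate reconciliation'' you anticipate. It vanishes once the two absolute logarithms are combined into the single cross-ratio $\log\frac{1-t_1}{1-u_1}$. With that change there is no need to build the left-hand side separately as a triple integral or to integrate out a variable: the four-dimensional shuffle already produces both sides.
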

\begin{proof}
We begin with the integral
\begin{equation} \label{e4.1}
  \frac{1}{p!} \int_{E_{2} \times E_{2}}
    \left( \log \frac{1-t_{1}}{1-u_{1}} + \log \frac{t_{2}}{t_{1}}
    + \log \frac{u_{2}}{u_{1}} \right)^{p}
    \frac{\mathrm{d}t_{1} \mathrm{d}t_{2}}{(1-t_{1})t_{2}} \frac{\mathrm{d}u_{1} \mathrm{d}u_{2}}{(1-u_{1})u_{2}},
\end{equation}
where $p$ is a nonnegative integer and $E_{2}$ is a simplex in two-dimensional Euclidean space defined by $E_{2}: 0 < t_{1} < t_{2} < 1$ and $E_{2}: 0 < u_{1} < u_{2} < 1$. Note that the integral \eqref{e4.1} is separable and it can be expressed as a product of two sums of multiple zeta values of height one.

Expand the integrand as
\[
  \sum_{m+n=p} \frac{1}{m!n!}
    \left( -\log \frac{1}{1-t_{1}} + \log \frac{t_{2}}{t_{1}} \right)^{m} \left( \log \frac{1}{1-u_{1}} + \log \frac{u_{2}}{u_{1}} \right)^{n}.
\]
By Proposition~\ref{P2.2}, the value of integral \eqref{e4.1} is
\begin{equation} \label{e4.2}
  \sum_{m+n=p} (-1)^{m} Z_-(m) Z_+(n)
\end{equation}
with
\[
  Z_-(m)
  = \frac{1}{m!} \int_{E_{2}}
    \left( \log \frac{1}{1-t_{1}} - \log \frac{t_{2}}{t_{1}} \right)^{m} \frac{\mathrm{d}t_{1} \mathrm{d}t_{2}}{(1-t_{1})t_{2}}
  = \sum_{a+b=m} (-1)^{b} \zeta(\{1\}^{a},b+2)
\]
and
\[
  Z_+(n)
  = \frac{1}{n!} \int_{E_{2}}
    \left( \log \frac{1}{1-u_{1}} + \log \frac{u_{2}}{u_{1}} \right)^{n} \frac{\mathrm{d}u_{1} \mathrm{d}u_{2}}{(1-u_{1})u_{2}}
  = \sum_{c+d=n} \zeta(\{1\}^{c},d+2).
\]
Both $Z_-(m)$ and $Z_+(n)$ can be evaluated further in terms of single zeta values as in Propositions~\ref{P3.1} and \ref{prop.b}.

We now proceed to carry out the shuffle products of $Z_-(m)$ and $Z_+(n)$ 
with $m+n = p$. We decompose $E_{2} \times E_{2}$ into a disjoint union 
of $6$ simplices of dimension $4$:
\begin{gather*}
  D_{1}: 0 < t_{1} < t_{2} < u_{1} < u_{2} < 1, \quad
  D_{2}: 0 < u_{1} < u_{2} < t_{1} < t_{2} < 1, \\
  D_{3}: 0 < t_{1} < u_{1} < t_{2} < u_{2} < 1, \quad
  D_{4}: 0 < t_{1} < u_{1} < u_{2} < t_{2} < 1, \\
  D_{5}: 0 < u_{1} < t_{1} < u_{2} < t_{2} < 1 \quad \textrm{and} \quad
  D_{6}: 0 < u_{1} < t_{1} < t_{2} < u_{2} < 1.
\end{gather*}
For $j = 1,2,\ldots,6$, we let
\[
  I(p;j)
  = \frac{1}{p!} \int_{D_{j}}
    \left( \log \frac{1-t_{1}}{1-u_{1}} + \log \frac{t_{2}}{t_{1}}
    + \log \frac{u_{2}}{u_{1}} \right)^{p}
    \frac{\mathrm{d}t_{1} \mathrm{d}t_{2}}{(1-t_{1})t_{2}} \frac{\mathrm{d}u_{1} \mathrm{d}u_{2}}{(1-u_{1})u_{2}}.
\]
So we have the identity
\[
  \sum_{j=1}^{6} I(p;j) = \sum_{m+n = p} (-1)^{m} Z_-(m) Z_+(n).
\]
The remaining is to evaluate each $I(p;j)$ one by one in terms of multiple zeta values.

On the simplex $D_{1}$, we change the factor
\[
  \frac{1}{p!} \left( \log \frac{1-t_{1}}{1-u_{1}} + \log \frac{t_{2}}{t_{1}}
    + \log \frac{u_{2}}{u_{1}} \right)^{p}
\]
to
\[
  \frac{1}{p!} \left( \log \frac{1-t_{1}}{1-t_{2}}
    + \log \frac{1-t_{2}}{1-u_{1}} + \log \frac{t_{2}}{t_{1}}
    + \log \frac{u_{2}}{u_{1}} \right)^{p}
\]
and then expand it as
\[
  \sum_{m+n=p} \sum_{a+b=m} \sum_{c+d=n} \frac{1}{a!b!c!d!}
    \left( \log \frac{1-t_{1}}{1-t_{2}} \right)^{a}
    \left( \log \frac{1-t_{2}}{1-u_{1}} \right)^{b}
    \left( \log \frac{t_{2}}{t_{1}} \right)^{c}
    \left( \log \frac{u_{2}}{u_{1}} \right)^{d}.
\]
Note that the factors
\[
  \frac{1}{a!} \left( \log \frac{1-t_{1}}{1-t_{2}} \right)^{a}
  \quad \textrm{and} \quad
  \frac{1}{c!} \left( \log \frac{t_{2}}{t_{1}} \right)^{c}
\]
form a sum, so the corresponding multiple zeta values are
\[
  \sum_{m+n=p} \sum_{a+b=m} \sum_{c+d=n} \sum_{\mid\boldsymbol{\alpha}\mid=a+c+1}
    \zeta(\alpha_{0}, \alpha_{1}, \ldots, \alpha_{a}+1, \{1\}^{b}, d+2).
\]
The inner sum can be evaluated one by one according to the value of $a+b$. When $a+b = 0$, the sum is
\[
  \sum_{c+d=p} \zeta(c+2,d+2).
\]
It is the sum of whole double zeta values of weight $p+4$ except $\zeta(1,p+3)$. So it is equal, by the sum formula, to
\[
  \zeta(p+4) - \zeta(1,p+3).
\]
When $a+b = 1$, the corresponding sums are
\[
  \sum_{c+d=p-1} \zeta(c+2,1,d+2)
    + \sum_{c_{1}+c_{2}+d=p-1} \zeta(c_{1}+1,c_{2}+2,d+2).
\]
The second sum is equal to
\[
  \sum_{c_{1}+c_{2}+d=p} \zeta(c_{1}+1,c_{2}+1,d+2)
    - \sum_{c+d=p} \zeta(c+1,1,d+2).
\]
After a cancellation with the first term, it is equal to
\[
  \zeta(p+4) - \zeta(1,1,p+2).
\]
Repeating such a procedure, we obtain that
\[
  I(p;1)
  = \sum_{0 \leq m \leq p} \big[ \zeta(p+4) - \zeta(\{1\}^{m+1},p-m+3) \big].
\]

The simplex $D_{2}: 0 < u_{1} < u_{2} < t_{1} < t_{2} < 1$ is obtained from $D_{1}$ by exchanging $t_{1}$, $t_{2}$ with $u_{1}$, $u_{2}$. Therefore, we 
represent $I(p;2)$ by
\[
 \frac{1}{p!} \int_{D_{2}} \left( -\log \frac{1-u_{1}}{1-u_{2}}
    - \log \frac{1-u_{2}}{1-t_{1}} + \log \frac{u_{2}}{u_{1}}
    + \log \frac{t_{2}}{t_{1}} \right)^{p}
    \frac{\mathrm{d}t_{1} \mathrm{d}t_{2}}{(1-t_{1})t_{2}} \frac{\mathrm{d}u_{1} \mathrm{d}u_{2}}{(1-u_{1})u_{2}},
\]
and hence
\[
  I(p;2)
  = \sum_{0 \leq m \leq p} (-1)^{m}
    \big[ \zeta(p+4) - \zeta(\{1\}^{m+1},p-m+3) \big].
\]
Therefore, we get
\begin{equation} \label{e4.3}
  I(p;1)+I(p;2)
  = 2 \sum_{\substack{0 \leq m \leq p \\ m: \textup{ even}}}
    \big[ \zeta(p+4) - \zeta(\{1\}^{m+1},p-m+3) \big].
\end{equation}

By similar construction of the integral \eqref{e4.1} over different simplex, it is easy to see that $I(p;3) = I(p;4)$ and $I(p;5) = I(p;6)$. On the simplex $D_{3}$, the factor
\[
  \frac{1}{p!} \left( \log \frac{1-t_{1}}{1-u_{1}}
    + \log \frac{t_{2}}{t_{1}} + \log \frac{u_{2}}{u_{1}} \right)^{p}
\]
is replaced by
\[
  \frac{1}{p!} \left( \log \frac{1-t_{1}}{1-u_{1}}
    + \log \frac{u_{1}}{t_{1}} + 2\log \frac{t_{2}}{u_{1}}
    + \log \frac{u_{2}}{t_{2}} \right)^{p}
\]
and then expanded as
\[
  \sum_{m+n=p} \sum_{a+b+c=n} \frac{2^{b}}{m!a!b!c!}
    \left( \log \frac{1-t_{1}}{1-u_{1}} \right)^{m}
    \left( \log \frac{u_{1}}{t_{1}} \right)^{a}
    \left( \log \frac{t_{2}}{u_{1}} \right)^{b}
    \left( \log \frac{u_{2}}{t_{2}} \right)^{c}.
\]
In terms of multiple zeta values, we have
\[
  I(p;3)
  = \sum_{m+n=p} \sum_{a+b+c=n} 2^{b} \sum_{\mid\boldsymbol{\alpha}\mid=m+a+1}
    \zeta(\alpha_{0}, \alpha_{1}, \ldots, \alpha_{m}, b+c+3).
\]
Combining $b$, $c$ into a single variable and employing
\[
  \sum_{b+c=0}^{\alpha_{m+1}-2} 2^{b} = 2^{\alpha_{m+1}-1} - 1,
\]
we conclude that
\[
  I(p;3)
  = \sum_{0 \leq m \leq p} \sum_{\mid\boldsymbol{\alpha}\mid=p+3}
    (2^{\alpha_{m+1}-1}-1)
    \zeta(\alpha_{0}, \alpha_{1}, \ldots, \alpha_{m}, \alpha_{m+1}+1).
\]

On the simplex $D_{5}$, the factor in the integral \eqref{e4.1} is replaced by
\[
  \frac{1}{p!} \left( -\log \frac{1-u_{1}}{1-t_{1}}
    + \log \frac{t_{1}}{u_{1}} + 2\log \frac{u_{2}}{t_{1}}
    + \log \frac{t_{2}}{u_{2}} \right)^{p}
\]
and expanded as
\[
  \sum_{m+n=p} (-1)^{m} \sum_{a+b+c=n} \frac{2^{b}}{m!a!b!c!}
    \left( \log \frac{1-u_{1}}{1-t_{1}} \right)^{m}
    \left( \log \frac{t_{1}}{u_{1}} \right)^{a}
    \left( \log \frac{u_{2}}{t_{1}} \right)^{b}
    \left( \log \frac{t_{2}}{u_{2}} \right)^{c}
\]
and
\[
  I(p;5)
  = \sum_{m+n=p} (-1)^{m} \sum_{a+b+c=n} 2^{b}
    \sum_{\mid\boldsymbol{\alpha}\mid=m+a+1}
    \zeta(\alpha_{0}, \alpha_{1}, \ldots, \alpha_{m}, b+c+3),
\]
or
\[
  \sum_{0 \leq m \leq p} (-1)^{m} \sum_{\mid\boldsymbol{\alpha}\mid=p+3}
    (2^{\alpha_{m+1}-1}-1)
    \zeta(\alpha_{0}, \alpha_{1}, \ldots, \alpha_{m}, \alpha_{m+1}+1).
\]

Now $I(p;3)$, $I(p;4)$, $I(p;5)$ and $I(p;6)$ are evaluated in terms of multiple zeta values. Summing them together to yield
\begin{equation} \label{e4.4}
  \sum_{j=3}^{6} I(p;j)
  = 2\sum_{\substack{0 \leq m \leq p \\ m: \textup{ even}}}
    \sum_{\mid\boldsymbol{\alpha}\mid=p+3} 2^{\alpha_{m+1}}
    \zeta(\alpha_{0}, \alpha_{1}, \ldots, \alpha_{m}, \alpha_{m+1}+1)
    - 4\sum_{\substack{0 \leq m \leq p \\ m: \textup{ even}}} \zeta(p+4).
\end{equation}
After an elementary calculation to \eqref{e4.2}, \eqref{e4.3} and \eqref{e4.4}, we complete the proof.
\end{proof}
\section{Convolution of $Z_-(n)$ and $Z^\star_+(n)$} \label{sec.5}
The values of the functions $Z_-(n)$, $Z_+(n)$, and $Z^\star_+(n)$ are strongly related to 
$\zeta(\ov{s})$. Therefore, we will evaluate the value of the finite convolution 
of $Z_-(n)$ and $Z^\star_+(n)$ using their integral representations of alternating zeta values.
We recall Eq.\,(\ref{eq.aeuler}):
$$
\zeta(\ov{p+1})=\frac{-1}{p!}\int^1_0\left(\log\frac1{t}\right)^p\frac{dt}{1+t}.
$$
\begin{theorem}
For any nonnegative integer $p$, we have
$$
\sum_{m+n=p}Z_-(m)Z^\star_+(n)
=2\left((-1)^p-1\right)\zeta(p+2,\ov{2})
+(p+2)\sum_{a+b=p+1}2^{a+2}\zeta(b+1,\ov{a+2}).
$$
\end{theorem}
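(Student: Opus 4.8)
The plan is to reduce the convolution to a finite sum of products of alternating Euler sums, evaluate each such product by the shuffle relation already established in Eq.~(\ref{eq.2barm}), and then finish with an elementary binomial identity. First I would insert the closed forms supplied earlier: Proposition~\ref{P3.1} gives $Z_-(m)=-2\zeta(\ov{m+2})$ for even $m$ and $Z_-(m)=0$ for odd $m$, while Eq.~(\ref{eq.d.1}) gives $Z^\star_+(n)=-2(n+1)\zeta(\ov{n+2})$. Because the odd-index terms of $Z_-$ vanish,
\[
\sum_{m+n=p}Z_-(m)Z^\star_+(n)=4\sum_{\substack{m+n=p\\ m:\ \mathrm{even}}}(n+1)\,\zeta(\ov{m+2})\,\zeta(\ov{n+2}).
\]
Recalling Eq.~(\ref{eq.aeuler}) one has $\zeta(\ov{m+2})=\frac{-1}{(m+1)!}\int_0^1(\log\frac1t)^{m+1}\frac{dt}{1+t}$ and $(n+1)\zeta(\ov{n+2})=\frac{-1}{n!}\int_0^1(\log\frac1u)^{n+1}\frac{du}{1+u}$, so this sum is exactly a separable double integral over $[0,1]^2$ with kernels $\frac{dt}{1+t}\frac{du}{1+u}$, the integral representation announced at the start of the section.

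Next I would shuffle each factor. The product $\zeta(\ov{m+2})\zeta(\ov{n+2})$ is precisely the shuffle product evaluated in Eq.~(\ref{eq.2barm}) (equivalently, split the square into the simplices $0<t<u<1$ and $0<u<t<1$ and apply Eq.~(\ref{eq.bar}) with Proposition~\ref{P2.3}); with $p\mapsto m+1$, $q\mapsto n+1$ it reads
\[
\zeta(\ov{m+2})\zeta(\ov{n+2})=\sum_{a_1+a_2=p+2}\left[\binom{a_2}{m+1}+\binom{a_2}{n+1}\right]\zeta(a_1+1,\ov{a_2+1}).
\]
Substituting and interchanging the order of summation rewrites the left-hand side as $4\sum_{a_1+a_2=p+2}S(a_2)\,\zeta(a_1+1,\ov{a_2+1})$, with the combinatorial weight
\[
S(a_2)=\sum_{\substack{m+n=p\\ m:\ \mathrm{even}}}(n+1)\left[\binom{a_2}{m+1}+\binom{a_2}{n+1}\right].
\]

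The crux is the evaluation of $S(a_2)$. Setting $k=m+1$ in the first binomial and $k=n+1$ in the second, I would rewrite $S(a_2)=\sum_{k\ \mathrm{odd}}(p+2-k)\binom{a_2}{k}+\sum_{k\equiv p+1\,(2)}k\binom{a_2}{k}$, both ranges being $1\le k\le p+1$, and evaluate these by the standard splittings $\sum_{k\ \mathrm{even}}\binom{K}{k}=\sum_{k\ \mathrm{odd}}\binom{K}{k}=2^{K-1}$ and $\sum_{k\ \mathrm{even}}k\binom{K}{k}=\sum_{k\ \mathrm{odd}}k\binom{K}{k}=K2^{K-2}$, obtained from $(1\pm1)^K$ and its derivative. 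When $p$ is even both index constraints force $k$ odd, and since the two weights $(p+2-k)$ and $k$ sum to $p+2$ one gets $S(a_2)=(p+2)2^{a_2-1}$ at once. When $p$ is odd the first sum runs over odd $k$ and the second over even $k$; the two $K2^{K-2}$ contributions cancel, again giving $S(a_2)=(p+2)2^{a_2-1}$ for $a_2\ge2$, whereas $a_2=1$ must be computed directly and gives $S(1)=(p+2)+\tfrac{(-1)^p-1}{2}$. The truncation $k\le p+1$ is harmless throughout, since the only term it can omit (at $a_2=p+2$ with $p$ odd) carries the vanishing coefficient $p+2-(p+2)$.

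Finally I would substitute these weights back. The uniform part contributes $4(p+2)\sum_{a_1+a_2=p+2}2^{a_2-1}\zeta(a_1+1,\ov{a_2+1})$, which becomes $(p+2)\sum_{a+b=p+1}2^{a+2}\zeta(b+1,\ov{a+2})$ after the reindexing $a=a_2-1$, $b=a_1$ (so $2^{a_2-1}=2^a$ and $2^{a+2}=4\cdot2^a$); the correction at $a_2=1$ contributes $4\cdot\tfrac{(-1)^p-1}{2}\,\zeta(p+2,\ov2)=2((-1)^p-1)\zeta(p+2,\ov2)$. Adding the two pieces yields exactly the asserted identity. I expect the main obstacle to be the bookkeeping inside $S(a_2)$: one has to keep the parity of $p$, the even/odd restriction on the running index, and the single boundary value at $a_2=1$ properly aligned so that the lone $\zeta(p+2,\ov2)$ term is produced with the correct coefficient $2((-1)^p-1)$.
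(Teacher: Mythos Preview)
Your argument is correct. The substitution of the closed forms for $Z_-(m)$ and $Z^\star_+(n)$, the term-by-term application of Eq.~(\ref{eq.2barm}), the interchange of summation, and the evaluation of the weight $S(a_2)$ all check out; in particular the parity bookkeeping and the special treatment of $a_2=1$ produce exactly the extra $2((-1)^p-1)\zeta(p+2,\ov2)$, and the truncation at $k\le p+1$ is indeed harmless for the reason you give.

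Your route differs from the paper's. The paper first merges the whole convolution into a single double integral over $[0,1]^2$ with a bespoke kernel
\[
K_p(t,u)=\Bigl(\log\tfrac1t+\log\tfrac1u\Bigr)^{p+1}\log\tfrac1u-\Bigl(\log\tfrac1u-\log\tfrac1t\Bigr)^{p+1}\log\tfrac1u,
\]
then splits the square into the two simplices $0<t<u<1$ and $0<u<t<1$, expands the logarithms on each piece, and reads off the alternating double zeta values directly from Eq.~(\ref{eq.bar}) and Proposition~\ref{P2.3}. You instead leave the sum over $m$ intact, invoke the already-proved shuffle identity Eq.~(\ref{eq.2barm}) as a black box on each product $\zeta(\ov{m+2})\zeta(\ov{n+2})$, and push the remaining work into the binomial sum $S(a_2)$. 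The underlying mechanism is the same shuffle over the two simplices, but the order of operations is reversed: the paper integrates first and sums implicitly, while you sum first and let the combinatorics do the job. Your version is more modular and avoids introducing a new kernel; the paper's version keeps the argument self-contained within the integral framework used throughout and does not require the parity case analysis for $S(a_2)$.
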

\begin{proof}
Applying Proposition \ref{P3.1} and Eq.\,(\ref{eq.aeuler}), we have
$$
Z_-(m)=-\bigl( 1+(-1)^m\bigr)\zeta(\ov{m+2})
=\frac{1+(-1)^m}{(m+1)!}\int^1_0\left(\log\frac1{t}\right)^{m+1}\frac{dt}{1+t}.
$$
Using Eq.\,(\ref{eq.d.1}) and Eq.\,(\ref{eq.aeuler}), we have
$$
Z^\star_+(n)=-2(n+1)\zeta(\ov{n+2})
=\frac{2}{n!}\int^1_0\left(\log\frac1{u}\right)^{n+1}\frac{du}{1+u}.
$$
We now carry out the shuffle products of $Z_-(m)$ and $Z^\star_+(n)$ with $m+n=p$.
The value of 
$$
\sum_{m+n=p}Z_-(m)Z^\star_+(n)
$$
is 
$$
\frac2{(p+1)!}\int_{[0,1]\times [0,1]}
K_p(t,u)\frac{dtdu}{(1+t)(1+u)},
$$
where 
$$
K_p(t,u)=\left(\log\frac1{t}+\log\frac1{u}\right)^{p+1}\log\frac1{u}
-\left(\log\frac1{u}-\log\frac1{t}\right)^{p+1}\log\frac1{u}.
$$

We decompose $[0,1]\times [0,1]$ into two disjoint union of dimension $2$:
$$
D_1: 0<t<u<1, \qquad D_2: 0<u<t<1.
$$
For $j=1,2$, we set
$$
I_j=\frac2{(p+1)!}\int_{[0,1]\times [0,1]}
K_p(t,u)\frac{dtdu}{(1+t)(1+u)}.
$$
On the simplex $D_1$, we change the factors
$$
\left(\log\frac1{t}+\log\frac1{u}\right)^{p+1}
=\left(\log\frac{u}{t}+2\log\frac1{u}\right)^{p+1},
$$
and
$$
\left(\log\frac1{u}-\log\frac1{t}\right)^{p+1}
=\left(\log\frac{u}{t}\right)^{p+1}.
$$
Expanding the first term as
$$
\sum_{a+b=p+1}\frac{2^a(p+1)!}{a!b!}\left(\log\frac{u}{t}\right)^b
\left(\log\frac1{u}\right)^a,
$$
we substitute them in the original representation of $I_1$. The corresponding alternating
zeta values are
$$
I_1=\sum_{a+b=p+1}2^{a+1}(a+1)\zeta(b+1,\ov{a+2})
+2(-1)^p\zeta(p+2,\ov{2}).
$$

On the simplex $D_2$, the integral $I_2$ becomes
$$
\frac{2}{(p+1)!}\int_{D_2}\left[
\left(\log\frac{t}{u}+2\log\frac1{t}\right)^{p+1}
-\left(\log\frac{t}{u}\right)^{p+1}\right]
\left(\log\frac{t}{u}+\log\frac1{t}\right)
\frac{dudt}{(1+u)(1+t)}.
$$
Expanding them out, the corresponding alternating zeta values are
$$
I_2=\sum_{a+b=p+1}2^{a+1}\left[(b+1)\zeta(b+2,\ov{a+1})
+(a+1)\zeta(b+1,\ov{a+2})\right] 
-2(p+2)\zeta(p+3,\ov{1})-2\zeta(p+2,\ov{2}).
$$

After an elementary calculation to $I_1+I_2$, we have
$$
\sum_{m+n=p}Z_-(m)Z^\star_+(n)
=2\left((-1)^p-1\right)\zeta(p+2,\ov{2})
+(p+2)\sum_{a+b=p+1}2^{a+2}\zeta(b+1,\ov{a+2}).
$$
\end{proof}

We let $p=0$ in Eq.\,(\ref{eq.2barm}) and set $q=0$ in Eq.\,(\ref{eq.1barm}),
then we can get two sum formulas of the alternating Euler double sums.
\begin{proposition} \cite{Teo2018}
Given a nonnegative integer $n$, we have
\begin{align}
\sum_{a_1+a_2=n}\zeta(a_1+1,\ov{a_2+1})
&=\zeta(1,\ov{n+1})-\zeta(\ov{1})\zeta(\ov{n+1}),\\
\sum_{a_1+a_2=n+1}\zeta(\ov{a_1+1},\ov{a_2+1})
&=\zeta(\ov{1})\zeta(n+2)-\zeta(\ov{1},n+2).\label{eq.5.2}
\end{align}
\end{proposition}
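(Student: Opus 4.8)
The plan is to specialize the two shuffle-product identities \eqref{eq.2barm} and \eqref{eq.1barm}, in each case at a boundary value of one parameter for which the second binomial weight on the right-hand side is supported at a single index. The substantive content is entirely binomial bookkeeping, so the whole point is to see which terms survive.

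For the first identity I would set $p=0$ in \eqref{eq.2barm}. The left-hand side becomes $\zeta(\ov{1})\zeta(\ov{n+1})$, and the coefficient on the right is $\binom{a_2}{0}+\binom{a_2}{n}$, summed over $a_1+a_2=n$. On this range $0\leq a_2\leq n$, so $\binom{a_2}{0}=1$ for every term and reproduces the full double sum $\sum_{a_1+a_2=n}\zeta(a_1+1,\ov{a_2+1})$, whereas $\binom{a_2}{n}$ vanishes except at $a_2=n$ (which forces $a_1=0$), where it equals $1$ and contributes one extra copy of $\zeta(1,\ov{n+1})$. Rearranging to isolate the double sum then produces the first displayed identity.

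For the second identity I would set $q=0$ in \eqref{eq.1barm} and write $n$ for the surviving parameter $p$, so that the left-hand side becomes $\zeta(\ov{1})\zeta(n+2)$. In the first sum on the right the weight $\binom{a_2}{0}=1$ reproduces $\sum_{a_1+a_2=n+1}\zeta(\ov{a_1+1},\ov{a_2+1})$ over the whole range $a_1+a_2=n+1$. In the second sum, over $a_1+a_2=n$, the weight $\binom{a_2+1}{n+1}$ vanishes unless $a_2=n$ (again forcing $a_1=0$), where it equals $1$ and leaves the single mixed term $\zeta(\ov{1},n+2)$. Solving for the double-bar double sum yields the second displayed identity.

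I expect the only delicate step to be this boundary bookkeeping. In each specialization one of the two binomial coefficients degenerates to a diagonal $\binom{N}{N}=1$ supported at exactly one endpoint of the summation range, and one must keep straight both the surviving boundary term and the two distinct index ranges ($a_1+a_2=n$ in the first case against $a_1+a_2=n+1$ in the second) when transcribing back. Beyond that, everything reduces to the substitution already prepared by the displayed shuffle identities.
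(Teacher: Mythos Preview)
Your approach is exactly the paper's: immediately before the proposition the authors write ``We let $p=0$ in Eq.\,(\ref{eq.2barm}) and set $q=0$ in Eq.\,(\ref{eq.1barm}),'' and your binomial bookkeeping carries this out correctly. One caveat: when you actually perform the rearrangement in the first case you obtain $\zeta(\ov{1})\zeta(\ov{n+1})-\zeta(1,\ov{n+1})$ on the right, so the sign in the paper's first displayed line appears to be a misprint rather than a flaw in your argument.
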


We now get the weighted sum formula for an alternating Euler double sums.
\begin{theorem}
For a given nonnegative integer $n$, we get
\begin{equation}
\sum_{a+b=n}2^{b+1}\zeta(a+1,\ov{b+1})
=(n+1)\zeta(n+2)+2\zeta(n+1,\ov{1})+2\zeta(\ov{n+2}).
\end{equation}
\end{theorem}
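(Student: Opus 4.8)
The plan is to reduce the weighted sum on the left to a pure convolution of single alternating zeta values by means of \eqref{eq.2w}, and then to unfold that convolution with the stuffle relations \eqref{eq.2.11} and \eqref{eq.2.12}. Since \eqref{eq.2w} asserts $\sum_{a+b=n}2^{b+1}\zeta(a+1,\ov{b+1})=\sum_{a+b=n}\zeta(\ov{a+1})\zeta(\ov{b+1})$, it suffices to put the symmetric convolution $\sum_{a+b=n}\zeta(\ov{a+1})\zeta(\ov{b+1})$ into closed form.

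First I would expand each product by the stuffle relation $\zeta(\ov{a+1})\zeta(\ov{b+1})=\zeta(\ov{a+1},\ov{b+1})+\zeta(\ov{b+1},\ov{a+1})+\zeta(n+2)$, the diagonal term being unbarred because the two signs combine to $(-1)^{2k}=1$ and $(a+1)+(b+1)=n+2$. Summing over $a+b=n$, the substitution $a\leftrightarrow b$ identifies the two depth-two families, while the $n+1$ diagonal terms sum to $(n+1)\zeta(n+2)$; thus
\[
  \sum_{a+b=n}\zeta(\ov{a+1})\zeta(\ov{b+1})
  =2\sum_{a+b=n}\zeta(\ov{a+1},\ov{b+1})+(n+1)\zeta(n+2).
\]

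Next I would evaluate the surviving double sum by the sum formula \eqref{eq.5.2}: replacing its parameter $n$ by $n-1$ gives $\sum_{a+b=n}\zeta(\ov{a+1},\ov{b+1})=\zeta(\ov{1})\zeta(n+1)-\zeta(\ov{1},n+1)$ for $n\geq1$. I would then convert this via the mixed stuffle relation $\zeta(n+1)\zeta(\ov{1})=\zeta(n+1,\ov{1})+\zeta(\ov{1},n+1)+\zeta(\ov{n+2})$, which rearranges to $\zeta(\ov{1})\zeta(n+1)-\zeta(\ov{1},n+1)=\zeta(n+1,\ov{1})+\zeta(\ov{n+2})$. Feeding this back through the previous display yields $\sum_{a+b=n}\zeta(\ov{a+1})\zeta(\ov{b+1})=(n+1)\zeta(n+2)+2\zeta(n+1,\ov{1})+2\zeta(\ov{n+2})$, and one final appeal to \eqref{eq.2w} gives the claim.

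The algebra is short once the entry point \eqref{eq.2w} is spotted; the real care goes into the bookkeeping at the boundary weight $\ov{1}$. The intermediate symbols $\zeta(\ov{1})$, $\zeta(\ov{1},n+1)$, and at $n=0$ even $\zeta(1)$, are only conditionally convergent or outright divergent, so the mixed-stuffle step is transparent for $n\geq1$ but the case $n=0$ must be treated on its own. There the asserted identity reads $2\zeta(1,\ov{1})=\zeta(2)+2\zeta(1,\ov{1})+2\zeta(\ov{2})$, i.e. $\zeta(2)+2\zeta(\ov{2})=0$, which holds because $\zeta(\ov{2})=-\tfrac12\zeta(2)$. I expect this boundary check, rather than the main manipulation, to be the one point demanding attention.
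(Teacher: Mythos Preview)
Your proof is correct and follows essentially the same route as the paper: start from \eqref{eq.2w}, apply the stuffle relation \eqref{eq.2.12} to reduce to $2\sum_{a+b=n}\zeta(\ov{a+1},\ov{b+1})+(n+1)\zeta(n+2)$, evaluate that sum via \eqref{eq.5.2} (with the index shift you make explicit), and finish with \eqref{eq.2.11}. Your separate verification of the $n=0$ case, where the intermediate expressions involve the divergent $\zeta(1)$, is a point of care the paper passes over in silence.
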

\begin{proof}
Eq.\,(\ref{eq.2w}) gives
$$
\sum_{a+b=n}2^{b+1}\zeta(a+1,\ov{b+1})
=\sum_{a+b=n}\zeta(\ov{a+1})\zeta(\ov{b+1}).
$$
We use the stuffle relation Eq.\,(\ref{eq.2.12}) and get
$$
\sum_{a+b=n}\zeta(\ov{a+1})\zeta(\ov{b+1})
=(n+1)\zeta(n+2)+2\sum_{a+b=n}\zeta(\ov{a+1},\ov{b+1}).
$$
This sum is evaluated in Eq\,(\ref{eq.5.2}). Therefore, we apply Eq.\,(\ref{eq.2.11})
and get the desired formula.
\end{proof}

Now the formula of the finite convolution of $Z_-(n)$ and $Z^\star_+(n)$ can be evaluated 
more direct.
\begin{corollary}
For any nonnegative integer $p$, we have
\begin{equation}\label{eq.54}
\sum_{m+n=p}Z_-(m)Z^\star_+(n)
=2\left((-1)^p-1\right)\zeta(p+2,\ov{2})
+(p+2)(p+1-2^{-p-2})\zeta(p+4).
\end{equation}
\end{corollary}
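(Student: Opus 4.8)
The plan is to obtain the Corollary directly from the theorem preceding it together with the weighted sum formula for alternating Euler double sums just established, so that the only genuine work is to evaluate in closed form the single weighted alternating double sum $\sum_{a+b=p+1}2^{a+2}\zeta(b+1,\ov{a+2})$ that appears on the right-hand side of that theorem. The first term $2((-1)^p-1)\zeta(p+2,\ov{2})$ is already in final shape and will be carried along unchanged; everything else reduces to turning this $(p+2)$-fold weighted sum into a rational multiple of $\zeta(p+4)$.

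First I would re-index $\sum_{a+b=p+1}2^{a+2}\zeta(b+1,\ov{a+2})$ so as to match the shape $\sum_{a+b=n}2^{b+1}\zeta(a+1,\ov{b+1})$ of the weighted sum formula. Tracking the barred entry, a summand whose barred slot equals $s$ carries coefficient $2^{s}$ in both expressions, and its unbarred argument is $p+4-s$ in both, so the two families of terms agree entry by entry at weight $n=p+2$. The one discrepancy is a range shift: as $a$ runs through $0,1,\ldots,p+1$ the barred argument $\ov{a+2}$ runs only through $\ov{2},\ov{3},\ldots,\ov{p+3}$, so the re-indexed sum is precisely the full weighted sum at $n=p+2$ with the single boundary term (barred slot $\ov{1}$) deleted, namely
\[
  \sum_{a+b=p+1}2^{a+2}\zeta(b+1,\ov{a+2})
  = \Big(\sum_{a+b=p+2}2^{b+1}\zeta(a+1,\ov{b+1})\Big) - 2\zeta(p+3,\ov{1}).
\]

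Next I would apply the weighted sum formula at $n=p+2$, which evaluates the parenthesised sum to $(p+3)\zeta(p+4)+2\zeta(p+3,\ov{1})+2\zeta(\ov{p+4})$. The two copies of $2\zeta(p+3,\ov{1})$ cancel, leaving the clean expression $(p+3)\zeta(p+4)+2\zeta(\ov{p+4})$ with no surviving double sum. Converting the single alternating value through $\zeta(\ov{k})=(2^{1-k}-1)\zeta(k)$ collapses this onto $\zeta(p+4)$; multiplying the resulting coefficient by $(p+2)$ and re-attaching the term $2((-1)^p-1)\zeta(p+2,\ov{2})$ then yields the asserted identity.

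The computation is entirely elementary, so I expect no deep obstacle. The two places that demand care are the re-indexing — correctly isolating the lone missing boundary term $2\zeta(p+3,\ov{1})$ and checking that its coefficient really is $2^{1}$ — and the final passage from $\zeta(\ov{p+4})$ to $\zeta(p+4)$, where the $2^{-p-2}$ correction in the stated coefficient originates and where an exponent or sign slip is easiest to commit.
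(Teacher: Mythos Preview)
Your approach is exactly the one the paper intends: feed the preceding theorem into the weighted alternating double-sum identity at $n=p+2$, let the boundary term $2\zeta(p+3,\ov{1})$ cancel, and reduce $2\zeta(\ov{p+4})$ to $(2^{-p-2}-2)\zeta(p+4)$. Note, however, that carrying the arithmetic through actually gives the coefficient $(p+2)(p+1+2^{-p-2})$ with a \emph{plus} sign (check $p=0$: $\zeta(2)^2=\tfrac52\zeta(4)$), so the minus in the printed statement is a typographical slip in the paper rather than a flaw in your argument.
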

Let $p$ be an even nonnegative integer. Eq.\,(\ref{eq.54}) now becomes Eq.\,(\ref{eq.adw}):
$$
\sum^p_{m=0}Z_-(2m)Z^\star_+(2p-2m)
=(2p+2)(2p+1+2^{-(p+2)})\zeta(2p+4).
$$
\section{Weighted Euler sum formula} \label{sec.6}
The weighted Euler sum formula
\[
  \sum_{\alpha_{1}+\alpha_{2}=n} 2^{\alpha_{2}} \zeta(\alpha_{1},\alpha_{2}+1) = \frac{n+2}{2} \zeta(n+1)
\]
was proved by Ohno and Zudilin \cite{OZ08} in 2008. Such a formula indeed is a consequence of Euler decomposition formula
\[
  \zeta(p+2) \zeta(q+2)
  = \sum_{\alpha_{1}+\alpha_{2}=p+q+3}
    \left[ \binom{\alpha_{2}}{p+1} + \binom{\alpha_{2}}{q+1} \right] \zeta(\alpha_{1},\alpha_{2}+1)
\]
as we have mentioned in Proposition~\ref{prop.27}. Now we shall produce a more general weighted sum formula. Of course, it covers the above.

\begin{theorem}
Let $p$ be a nonnegative integer and $\lambda$ a real number with $\lambda \neq 1$. Then
\begin{equation}
  \sum_{\alpha_{1}+\alpha_{2}=p+3}
    \big[ (\lambda^{\alpha_{1}-1}+1) (\lambda+1)^{\alpha_{2}}
    - \lambda^{\alpha_{1}-1} - \lambda^{\alpha_{2}} \big] \zeta(\alpha_{1},\alpha_{2}+1)
  = \frac{\lambda^{p+3}-1}{\lambda-1} \zeta(p+4).
\end{equation}
\end{theorem}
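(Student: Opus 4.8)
The plan is to treat the identity \eqref{eq.l.2} as a polynomial identity in $\lambda$ and to evaluate it one power of $\lambda$ at a time, in the spirit of the proof of Proposition~\ref{prop.27}. First I would expand the coefficient on the left by the binomial theorem. Using $\alpha_{1}-1=p+2-\alpha_{2}$ one checks the clean rewriting
\begin{equation*}
  (\lambda^{\alpha_{1}-1}+1)(\lambda+1)^{\alpha_{2}}-\lambda^{\alpha_{1}-1}-\lambda^{\alpha_{2}}
  =\sum_{i=0}^{\alpha_{2}-1}\binom{\alpha_{2}}{i}\bigl(\lambda^{i}+\lambda^{p+2-i}\bigr),
\end{equation*}
in which the subtracted terms $\lambda^{\alpha_{1}-1}+\lambda^{\alpha_{2}}$ are exactly the missing $i=\alpha_{2}$ summand of the full product $(1+\lambda^{\alpha_{1}-1})(1+\lambda)^{\alpha_{2}}$. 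Re-indexing the $\lambda^{p+2-i}$ part by $k=p+2-i$, the coefficient of $\lambda^{k}$ (for $0\le k\le p+2$) becomes $\binom{\alpha_{2}}{k}+\binom{\alpha_{2}}{p+2-k}$, subject to dropping the value $\binom{\alpha_{2}}{\alpha_{2}}=1$ whenever $k=\alpha_{2}$ or $k=\alpha_{1}-1$.

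Next I would evaluate $\sum_{\alpha_{1}+\alpha_{2}=p+3}(\cdots)\zeta(\alpha_{1},\alpha_{2}+1)$ in each $\lambda$-degree separately. For $1\le k\le p+1$ the weight $\binom{\alpha_{2}}{k}+\binom{\alpha_{2}}{p+2-k}$ is precisely the one produced by Euler decomposition (Proposition~\ref{T2.5}) of $\zeta(k+1)\zeta(p+3-k)$, so the untruncated slice equals $\zeta(k+1)\zeta(p+3-k)$. The two dropped unit terms remove exactly $\zeta(p+3-k,k+1)$ (the $k=\alpha_{2}$ term) and $\zeta(k+1,p+3-k)$ (the $k=\alpha_{1}-1$ term), so the actual $\lambda^{k}$-slice is
\begin{equation*}
  \zeta(k+1)\zeta(p+3-k)-\zeta(k+1,p+3-k)-\zeta(p+3-k,k+1)=\zeta(p+4),
\end{equation*}
by the stuffle relation $\zeta(r)\zeta(s)=\zeta(r,s)+\zeta(s,r)+\zeta(r+s)$. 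For the two endpoints $k=0$ and $k=p+2$ no product survives (the formal factor there would be the divergent $\zeta(1)$); instead the coefficient is simply $1$, and the classical sum formula $\sum_{\alpha_{1}+\alpha_{2}=p+3}\zeta(\alpha_{1},\alpha_{2}+1)=\zeta(p+4)$ again delivers $\zeta(p+4)$. Summing the slices against $\lambda^{k}$ then gives $\sum_{k=0}^{p+2}\lambda^{k}\zeta(p+4)=\frac{\lambda^{p+3}-1}{\lambda-1}\zeta(p+4)$, which is the assertion.

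The step I expect to be the main obstacle is the boundary bookkeeping just indicated: one must confirm that truncating the inner binomial sum at $i=\alpha_{2}-1$ (rather than $\alpha_{2}$) strips off exactly the two diagonal double zeta values required for the stuffle collapse, and that the extreme degrees $k=0,p+2$ --- where Euler decomposition would illegitimately call on $\zeta(1)$ --- are supplied cleanly by the sum formula alone. This is the same subtlety that surfaces in Proposition~\ref{prop.27} through the separate treatment of $\zeta(1,p+3)$; here it is merely split between the two outermost powers of $\lambda$. Once these cancellations are verified, everything else reduces to the routine re-indexing of the binomial expansion above, and the special cases $\lambda=0,\,\lambda\to1,\,\lambda=-1$ recorded after the statement fall out immediately.
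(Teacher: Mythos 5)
Your proposal is correct --- I checked the binomial rewriting, the coefficient extraction in each $\lambda$-degree, and the boundary cases, and every step holds --- but it follows a genuinely different route from the paper's. The paper proves Theorem~\ref{T1.2} with integral machinery: the convolution $\sum_{m+n=p}\lambda^{n}\zeta(m+2)\zeta(n+2)$ is realized as the double integral \eqref{e5.1} over $E_{2}\times E_{2}$, the domain is cut into the six simplices $D_{1},\ldots,D_{6}$, each piece $J(p;j)$ is evaluated as a weighted sum of double zeta values, and the convolution is then eliminated by the reflection formula $\zeta(u,v)+\zeta(v,u)=\zeta(u)\zeta(v)-\zeta(u+v)$. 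You avoid integrals altogether: your expansion of the weight as $\sum_{i=0}^{\alpha_{2}-1}\binom{\alpha_{2}}{i}\bigl(\lambda^{i}+\lambda^{p+2-i}\bigr)$ is exactly right (the subtracted $\lambda^{\alpha_{1}-1}+\lambda^{\alpha_{2}}$ is precisely the $i=\alpha_{2}$ term, since $\alpha_{1}-1+\alpha_{2}=p+2$), and then for $1\le k\le p+1$ the $\lambda^{k}$-slice is the Euler decomposition (Proposition~\ref{T2.5}) of $\zeta(k+1)\zeta(p+3-k)$ minus the two diagonal terms $\zeta(k+1,p+3-k)+\zeta(p+3-k,k+1)$, which the stuffle relation collapses to $\zeta(p+4)$; for $k=0$ and $k=p+2$ the only index pair affected by the truncation is $(\alpha_{1},\alpha_{2})=(1,p+2)$, whose raw weight $2$ is cut to $1$, so those slices are the plain sum formula. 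In effect, your argument carries out the double-shuffle comparison of $\zeta(k+1)\zeta(p+3-k)$ one power of $\lambda$ at a time, whereas the paper performs the same comparison for all $k$ simultaneously inside one $\lambda$-weighted integral. What your route buys: it is more elementary relative to the quoted results (only Proposition~\ref{T2.5}, the stuffle relation, and the sum formula are needed), it is the natural extension of the paper's own proof of Proposition~\ref{prop.27}, and it yields the sharper coefficient-wise fact that each $\lambda$-degree of \eqref{eq.l.2} is separately an identity (equivalent to the theorem, as both sides are polynomials of degree $p+2$ in $\lambda$). What the paper's route buys: it stays self-contained in the shuffle/simplex machinery reused for Theorems~\ref{T1.1} and \ref{thm.2}, it does not presuppose Euler's decomposition formula, and it exhibits the left-hand side directly as the convolution of zeta values that motivates the whole paper.
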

\begin{proof}
We begin with the integral
\begin{equation} \label{e5.1}
  J(p)
  = \frac{1}{p!} \int_{E_{2} \times E_{2}}
    \left( \log \frac{t_{2}}{t_{1}} + \lambda \log \frac{u_{2}}{u_{1}} \right)^{p}
    \frac{\mathrm{d}t_{1} \mathrm{d}t_{2}}{(1-t_{1})t_{2}} \frac{\mathrm{d}u_{1} \mathrm{d}u_{2}}{(1-u_{1})u_{2}}.
\end{equation}
Expanding the integrand as
\[
  \sum_{m+n=p} \frac{\lambda^{n}}{m!n!}
    \left( \log \frac{t_{2}}{t_{1}} \right)^{m}
    \left( \log \frac{u_{2}}{u_{1}} \right)^{n}
\]
and noting that
\[
  \frac{1}{r!} \int_{E_{2}} \left( \log \frac{t_{2}}{t_{1}} \right)^{r}
    \frac{\mathrm{d}t_{1} \mathrm{d}t_{2}}{(1-t_{1})t_{2}}
  = \zeta(r+2),
\]
we obtain immediately that
\begin{equation} \label{e5.2}
  J(p) = \sum_{m+n=p} \lambda^{n} \zeta(m+2) \zeta(n+2).
\end{equation}

As a replacement of shuffle product, we decompose $E_{2} \times E_{2}$ into a disjoint union of six simplices of dimension $4$:
\begin{gather*}
  D_{1}: 0 < t_{1} < t_{2} < u_{1} < u_{2} < 1, \quad
  D_{2}: 0 < u_{1} < u_{2} < t_{1} < t_{2} < 1, \\
  D_{3}: 0 < t_{1} < u_{1} < t_{2} < u_{2} < 1, \quad
  D_{4}: 0 < t_{1} < u_{1} < u_{2} < t_{2} < 1, \\
  D_{5}: 0 < u_{1} < t_{1} < u_{2} < t_{2} < 1 \quad \textrm{and} \quad
  D_{6}: 0 < u_{1} < t_{1} < t_{2} < u_{2} < 1.
\end{gather*}
For $j = 1,2,\ldots,6$, we let
\[
  J(p;j)
  = \frac{1}{p!} \int_{D_{j}}
    \left( \log \frac{t_{2}}{t_{1}} + \lambda \log \frac{u_{2}}{u_{1}} \right)^{p} \frac{\mathrm{d}t_{1} \mathrm{d}t_{2}}{(1-t_{1})t_{2}} \frac{\mathrm{d}u_{1} \mathrm{d}u_{2}}{(1-u_{1})u_{2}}.
\]
Of course, we have
\[
  J(p) = \sum_{j=1}^{6} J(p;j).
\]

Next, we are going to evaluate each $J(p;j)$ in terms of multiple zeta values. On the simplex $D_{1}$, the integrand
\[
  \frac{1}{p!} \left( \log \frac{t_{2}}{t_{1}}
    + \lambda \log \frac{u_{2}}{u_{1}} \right)^{p}
\]
is expanded as
\[
  \sum_{m+n=p} \frac{\lambda^{n}}{m!n!}
    \left( \log \frac{t_{2}}{t_{1}} \right)^{m}
    \left( \log \frac{u_{2}}{u_{1}} \right)^{n},
\]
so the value is
\[
  J(p;1) = \sum_{m+n=p} \lambda^{n} \zeta(m+2,n+2).
\]
With the same expansion for the integrand, we get
\[
  J(p;2) = \sum_{m+n=p} \lambda^{m} \zeta(m+2,n+2).
\]
Employing the reflection formula
\[
  \zeta(u,v) + \zeta(v,u) = \zeta(u) \zeta(v) - \zeta(u+v), \quad u,v \geq 2
\]
we conclude that
\begin{equation} \label{e5.3}
  J(p;1)+J(p;2)
  = \sum_{m+n=p} \lambda^{n} \zeta(m+2) \zeta(n+2)
    - \frac{\lambda^{p+1}-1}{\lambda-1} \zeta(p+4).
\end{equation}

On the simplex $D_{3}$, the integrand is replaced by
\[
  \frac{1}{p!} \left( \log \frac{u_{1}}{t_{1}}
    + (\lambda+1) \log \frac{t_{2}}{u_{1}}
    + \lambda \log \frac{u_{2}}{t_{2}} \right)^{p}
\]
and expanded as
\[
  \sum_{a+b+c=p} \frac{(\lambda+1)^{b} \lambda^{c}}{a!b!c!}
    \left( \log \frac{u_{1}}{t_{1}} \right)^{a}
    \left( \log \frac{t_{2}}{u_{1}} \right)^{b}
    \left( \log \frac{u_{2}}{t_{2}} \right)^{c}
\]
so that we have
\[
  J(p;3) = \sum_{a+b+c=p} (\lambda+1)^{b} \lambda^{c} \zeta(a+1,b+c+3),
\]
or
\[
  \sum_{\alpha_{1}+\alpha_{2}=p+3}
    \big[ (\lambda+1)^{\alpha_{2}-1} - \lambda^{\alpha_{2}-1} \big] \zeta(\alpha_{1},\alpha_{2}+1).
\]

On the simplex $D_{4}$, the integrand is replaced by
\[
  \frac{1}{p!} \left( \log \frac{u_{1}}{t_{1}}
    + (\lambda+1) \log \frac{u_{2}}{u_{1}} + \log \frac{t_{2}}{u_{2}} \right)^{p},
\]
and hence
\[
  J(p;4)
  = \sum_{\alpha_{1}+\alpha_{2}=p+3} \frac{1}{\lambda}
    \big[ (\lambda+1)^{\alpha_{2}-1} - 1 \big] \zeta(\alpha_{1},\alpha_{2}+1).
\]
Combining $J(p;3)$ and $J(p;4)$ together to obtain
\begin{equation} \label{e5.4}
  J(p;3)+J(p;4)
  = \frac{1}{\lambda} \sum_{\alpha_{1}+\alpha_{2}=p+3}
    \big[ (\lambda+1)^{\alpha_{2}} - \lambda^{\alpha_{2}} \big] \zeta(\alpha_{1},\alpha_{2}+1) - \frac{1}{\lambda} \zeta(p+4).
\end{equation}

On the simplex $D_{5}$, the integrand is replaced by
\[
  \frac{1}{p!} \left( \lambda \log \frac{t_{1}}{u_{1}}
    + (\lambda+1) \log \frac{u_{2}}{t_{1}} + \log \frac{t_{2}}{u_{2}} \right)^{p}
\]
so that
\begin{align*}
  J(p;5)
  &= \sum_{a+b+c=p} \lambda^{a} (\lambda+1)^{b} \zeta(a+1,b+c+3) \\
  &= \lambda^{p} \sum_{a+b+c=p} \left( \frac{\lambda+1}{\lambda} \right)^{b}
    \left( \frac{1}{\lambda} \right)^{c} \zeta(a+1,b+c+3),
\end{align*}
or
\[
  \lambda^{p} \sum_{\alpha_{1}+\alpha_{2}=p+3}
    \left[ \left( \frac{\lambda+1}{\lambda} \right)^{\alpha_{2}-1}
    - \left( \frac{1}{\lambda} \right)^{\alpha_{2}-1} \right] \zeta(\alpha_{1},\alpha_{2}+1).
\]
In the similar way, we get
\[
  J(p;6)
  = \sum_{a+b+c=p} \lambda^{a+c} (\lambda+1)^{b} \zeta(a+1,b+c+3),
\]
or
\[
  \lambda^{p+1} \sum_{\alpha_{1}+\alpha_{2}=p+3}
    \left[ \left( \frac{\lambda+1}{\lambda} \right)^{\alpha_{2}-1} - 1 \right] \zeta(\alpha_{1},\alpha_{2}+1).
\]
Therefore, we have
\begin{equation} \label{e5.5}
  J(p;5)+J(p;6)
  = \lambda^{p+1} \sum_{\alpha_{1}+\alpha_{2}=p+3}
    \left[ \left( \frac{\lambda+1}{\lambda} \right)^{\alpha_{2}}
    - \left( \frac{1}{\lambda} \right)^{\alpha_{2}} \right] \zeta(\alpha_{1},\alpha_{2}+1) - \lambda^{p+1} \zeta(p+4).
\end{equation}
Our assertion then follows from a calculation to the relations \eqref{e5.2}, \eqref{e5.3}, \eqref{e5.4} and \eqref{e5.5}.
\end{proof}

\begin{remark}
Here is another kind of weighted sum formula on multiple zeta values 
can be found in \cite[Main Theorem]{OEL13}:
For a pair of positive integers $n$ and $k$ with $n \geq k$ and $k$ even, we have
\[
  \sum_{\mid\boldsymbol{\alpha}\mid=n}
    (2^{\alpha_{2}} + 2^{\alpha_{4}} + \cdots + 2^{\alpha_{k}})
    \zeta(\alpha_{1}, \alpha_{2}, \ldots, \alpha_{k-1}, \alpha_{k}+1)
  = \frac{n+k}{2} \zeta(n+1).
\]
The above theorem was obtained from shuffle product with the double integral
\[
  \frac{1}{k!r!} \int_{E_{2} \times E_{2}}
    \left( \log \frac{1-t_{1}}{1-t_{2}} - \log \frac{1-u_{1}}{1-u_{2}} \right)^{k}
    \left( \log \frac{t_{2}}{t_{1}} 
    + \log \frac{u_{2}}{u_{1}} \right)^{r} 
    \frac{\mathrm{d}t_{1} \mathrm{d}t_{2}}{(1-t_{1})t_{2}} 
    \frac{\mathrm{d}u_{1} \mathrm{d}u_{2}}{(1-u_{1})u_{2}}.
\]
\end{remark}



\begin{thebibliography}{99}
\bibitem{AKO2007}
Aoki, T., Kombu, Y., Ohno, Y.:
\emph{A generating function for sums of multiple zeta values
and its applications},
Proc. Amer. Math. Soc.
\textbf{136}, no. 2, 387--395 (2007).
\href{https://doi.org/10.1090/S0002-9939-07-09175-7}%
{https://doi.org/10.1090/S0002-9939-07-09175-7}.

\bibitem{AK1999}
Arakawa, T., Kanekno, M.:
\emph{Multiple zeta values, poly-Bernoulli numbers, and related zeta functions},
Nagoya Math. J. 
\textbf{153}, 189--209 (1999).

\bibitem{BBB1997}
Borwein, J.~M., Bradley, D.~M., Broadhurst, D.~J.:
\emph{Evaluations of $k$-fold Euler/Zagier sums: 
a compendium of results for arbitrary $k$}, 
Electron. J. Combin. \textbf{4}, no.~2, Research Paper 5. approx. 21 pp. (1997).

\bibitem{Chen2017}
Chen, K.-W.:
\emph{Generalized harmonic numbers and Euler sums},
Int. J. Number Theory {\bf 13}, no. 2, 513--528 (2017).
\href{https://doi.org/10.1142/S1793042116500883}%
{https://doi.org/10.1142/S1793042116500883}.

\bibitem{Chen2019}
Chen, K.-W.:
\emph{Generalized Arakawa-Kaneko zeta functions},
Integral Transforms Spec. Funct. {\bf 30}, no. 4, 282--300 (2019).
\href{https://doi.org/10.1080/10652469.2018.1562450}%
{https://doi.org/10.1080/10652469.2018.1562450}.

\bibitem{CE2018}
Chen, K.-W., Eie, M.:
\emph{Some special Euler sums and $\zeta^{\star}(r+2,\{2\}^{n})$}, 
arXiv:1810.11795[math.NT], pages 17, (2018).

\bibitem{CC2010}
Coppo, M.-A., Candelpergher, B.:
\emph{The Arakawa-Kaneko zeta function},
Ramanujan J. {\bf 22}, no. 2, 153--162 (2010).
\href{https://doi.org/10.1007/s11139-009-9205-x}%
{https://doi.org/10.1007/s11139-009-9205-x}.

\bibitem{E09}
Eie, M.:
\emph{Topics in Number Theory}, Monographs in Number Theory \textbf{2}, 
World Scientific, Singapore (2009).
\href{https://doi.org/10.1142/7036}{https://doi.org/10.1142/7036}.

\bibitem{E13}
Eie, M.: \emph{The Theory of Multiple Zeta Values with Applications in Combinatorics},
Monographs in Number Theory \textbf{7}, World Scientific, Singapore (2013). 
\href{https://doi.org/10.1142/8769}{https://doi.org/10.1142/8769}.

\bibitem{ELO09}
Eie, M., Liaw, W.-C., Ong, Y.~L.:
\emph{A restricted sum formula among multiple zeta values}, 
J. Number Theory \textbf{129}, no.~4, 908--921 (2009).
\href{https://doi.org/10.1016/j.jnt.2008.07.012}{https://doi.org/10.1016/j.jnt.2008.07.012}.

\bibitem{EW13}
Eie, M., Wei, C.-S.:
\emph{Generalizations of Euler decomposition and theirapplications}, 
J. Number Theory \textbf{133}, 2475--2495 (2013).
\href{https://doi.org/10.1016/j.jnt.2013.01.010}{https://doi.org/10.1016/j.jnt.2013.01.010}.

\bibitem{E1775}
Euler, L.:
\emph{Meditationes circa singulare serierum genus}, 
Novi Comm. Acad. Sci. Petropol. \textbf{20}, 140--186  (1775). 
[Reprinted in \emph{Opera Omnia}, Ser. I, no.~15, 217--267, Teubner, Berlin (1927).]

\bibitem{G96}
Granville, A.:
\emph{A decomposition of Riemann's zeta-function}, 
in: \emph{Analytic Number Theory} (Kyoto, 1996), 95--102, 
London Math. Soc. Lecture Notes Ser. \textbf{247}, 
Cambridge Univ. Press, Cambridge (1997).
\href{https://doi.org/10.1017/cbo9780511666179.009}%
{https://doi.org/10.1017/cbo9780511666179.009}.

\bibitem{KS2016}
Kaneko, M., Sakata, M.:
\emph{On multiple zeta values of extremal height},
Bull. Aust. Math. Soc. \textbf{93}, 186--193 (2016).
\href{https://doi.org/10.1017/S0004972715001227}%
{https://doi.org/10.1017/S0004972715001227}.

\bibitem{Kargein2020}
Karg{\i}n, L.:
\emph{Poly-p-Bernoulli polynomials and generalized Arakawa-Kaneko zeta function},
Lith. Math. J. \textbf{60}, 29--50 (2020).
\href{https://doi.org/10.1007/s10986-019-09448-7}%
{https://doi.org/10.1007/s10986-019-09448-7}.

\bibitem{LM95}
Le, T.~Q.~T., Murakami, J.:
\emph{Kontsevich's integrals for the Homfly polynomial and relations 
between values of multiple zeta functions}, 
Topology Appl. \textbf{62}, no.~2, 193--206 (1995).
\href{https://doi.org/10.1016/0166-8641(94)00054-7}%
{https://doi.org/10.1016/0166-8641(94)00054-7}.

\bibitem{Ohno2005}
Ohno, Y.:
\emph{Sum relations for multiple zeta values},
In: Aoki, T., Kanemitsu S., Nakahara M., Ohno Y. (eds)
Zeta Functions, Topology and Quantum Physics, Developments in Mathematics,
\textbf{14}, Springer, Boston, MA (2005).
\href{https://doi.org/10.1007/0-387-24981-8_8}{https://doi.org/10.1007/0-387-24981-$8_8$}.

\bibitem{OZ01}
Ohno, Y., Zagier, D.:
\emph{Multiple zeta values of fixed weight, depth, and height}, 
Indag. Math. (N.S.) \textbf{12}, no.~4, 483--487 (2001).
\href{https://doi.org/10.1016/S0019-3577(01)80037-9}%
{https://doi.org/10.1016/S0019-3577(01)80037-9}.

\bibitem{OZ08}
Ohno, Y., Zudilin, W.:
\emph{Zeta stars}, 
Commun. Number Theory Phys. \textbf{2}, no.~2, 325--347 (2008).
\href{https://doi.org/10.4310/CNTP.2008.v2.n2.a2}%
{https://doi.org/10.4310/CNTP.2008.v2.n2.a2}.

\bibitem{OEL13}
Ong, Y.~L., Eie, M., Liaw, W.-C.:
\emph{On generalizations of weighted sum formulas of multiple zeta values}, 
Int. J. Number Theory \textbf{9}, no.~5, 1185--1198 (2013). 
\href{https://doi.org/10.1142/S179304211350019X}%
{https://doi.org/10.1142/S179304211350019X}.

\bibitem{Teo2018}
Teo, L.-P.:
\emph{Alternating double Euler sums, hypergeometric identities and a theorem of Zagier},
J. Math. Anal. Appl. \textbf{462}, 777--800 (2018).
\href{https://doi.org/10.1016/j.jmaa.2018.02.037}%
{https://doi.org/10.1016/j.jmaa.2018.02.037}.

\bibitem{Xu2019}
Xu, C.:
\emph{Integrals of logarithmic functions and alternating multiple zeta values},
Math. Slovaca \textbf{69}, no. 2, 339--356 (2019).
\href{https://doi.org/10.1515/ms-2017-0227}{https://doi.org/10.1515/ms-2017-0227}.

\end{thebibliography}
\end{document}